\documentclass[11pt,reqno,centertags,noamsfonts]{amsart}
\usepackage[margin=34mm]{geometry}

\usepackage[latin2]{inputenc}
\usepackage{amsmath}
\usepackage{amssymb}
\usepackage{amsthm}
\usepackage{xcolor}
\usepackage{color}
\usepackage[UKenglish]{babel}
\usepackage[colorlinks=true,linkcolor=blue,
anchorcolor=black,citecolor=cyan,filecolor=black,menucolor=black,runcolor=black,
urlcolor=blue]{hyperref}

\usepackage{mathtools}
\mathtoolsset{showonlyrefs}
\usepackage{comment}
\usepackage{enumitem}
\usepackage{csquotes}
\usepackage{mathrsfs}
\newcommand{\scrp}{\mathscr{P}}
 
\newcommand{\weakstar}{\overset{*}{\rightharpoonup}}
\newcommand{\wk}{\rightharpoonup}

\usepackage{physics} 

\newtheorem{theorem}{Theorem}[section]

\newtheorem{proposition}[theorem]{Proposition}
\newtheorem{lemma}[theorem]{Lemma}
\newtheorem{corollary}[theorem]{Corollary}
\newtheorem{definition}[theorem]{Definition}
\newtheorem{remark}[theorem]{Remark}

\newtheorem*{theorem*}{Theorem}
\newtheorem*{rem}{\normalfont\em Remark}

\allowdisplaybreaks[2]

\newcommand{\supp}{\operatorname{supp}} 
\newcommand{\divv}{\operatorname{div}}

\newcommand{\diag}{\operatorname{diag}}

\newcommand{\dist}{\operatorname{dist}}

\newcommand{\rel}{\mathrm{rel}}
\newcommand{\loc}{\mathrm{loc}}
\newcommand{\Om}{\Omega}
\newcommand{\ve}{\varepsilon}

\newcommand{\vp}{\varphi}
\newcommand{\ol}{\overline}

\newcommand{\uin}{u^\mathrm{in}}
\newcommand{\Uin}{U^\mathrm{in}}
\newcommand{\uvec}{U}
\newcommand{\uve}{u^{\ve}}
\newcommand{\surfnormal}{\textbf{n}_{\hypersurf}}
\newcommand{\hypersurf}{\mathcal{S}}
\newcommand{\hsurfCl}{\mathcal{\ol S}}

 \newcommand{\atype}{$\mathfrak{A}$}
  \newcommand{\btype}{$\mathfrak{B}$}
   \newcommand{\ctype}{$\mathfrak{C}$}
    \newcommand{\Atype}{\mathfrak{A}}
   \newcommand{\Btype}{\mathfrak{B}}
   \newcommand{\Ctype}{\mathfrak{C}}
   \newcommand{\FF}{\mathbb{Y}}
\newcommand{\WW}{\mathbb{W}}
 
 \newcommand{\sn}{{1+n}}
 \newcommand{\vd}{\alpha}
 \newcommand{\hh}{\hat h}
 \newcommand{\PP}{\hat\scrp}
 \newcommand{\hA}{\hat A}
 \newcommand{\MM}{\mathbb{\hat M}}
 
 \newcommand{\sumA}{\sum_{\alpha\in\mathfrak{A}}u_\alpha}

\numberwithin{equation}{section}

\makeatletter
\@namedef{subjclassname@2020}{\textup{2020} Mathematics Subject Classification}
\makeatother

\author[K. Hopf]{Katharina Hopf}
\address{Katharina Hopf, Weierstrass Institute for Applied Analysis and Stochastics (WIAS),
	Mohrenstrasse 39, 10117 Berlin, Germany, 
	E-Mail: hopf@wias-berlin.de}

\author[M. Burger]{Martin Burger}
\address{Martin Burger, Department of Mathematics, Friedrich-Alexander Universit\"at Erlangen-N\"urnberg, Cauerstrasse 11, 91058 Erlangen, Germany,
	E-mail: martin.burger@fau.de.}
	
\keywords{Cross diffusion, size exclusion, volume filling,   gradient-flow structure,  degenerate nonlinear mobility, weak-strong stability,  large-data weak solutions, long-time asymptotics, relative entropy, convexity.
}

\subjclass[2020]{
	35K59, %Quasilinear parabolic equations 
	35K65, %Degenerate parabolic equations 
	35A02, %Uniqueness problems for PDEs
	35B35%Stability in context of PDEs 
} 

\title[Multi-species diffusion with size exclusion
]{On multi-species diffusion with size exclusion
}

\date{}

\begin{document}
	
		\begin{abstract}	
			We revisit a classical continuum model for the diffusion of multiple species with size-exclusion constraint, which leads to a degenerate nonlinear cross-diffusion system. 
			The purpose of this article is twofold: first, it aims at a systematic study of the question of	 existence of weak solutions and their long-time asymptotic behaviour. Second, it provides a weak-strong stability estimate for a wide range of coefficients, which had been missing so far. 
			
			In order to achieve the results mentioned above, we exploit the formal gradient-flow structure of the model with respect to a logarithmic entropy, which leads to best estimates in the full-interaction case, where all cross-diffusion coefficients are non-zero. Those are crucial to obtain the minimal Sobolev regularity needed for a weak-strong stability result. For meaningful cases when some of the coefficients vanish, we provide a novel existence result based on approximation by the full-interaction case.
	\end{abstract}
	
	\maketitle
	
	\section{Introduction}	
	
	Nonlinear cross-diffusion models and their mathematical analysis have received  
	considerable
	attention in the last years, including various 
	degenerate cases as arising in models with size exclusion or local repulsion (cf.\ e.g.~\cite{berendsen2017cross,berendsen2020,BDiPS_2010,chen2021cross,chen2021rigorous,JS_2013,liero2013gradient,Juengel_2015,ZJ_2017}).
	While the local-in-time existence and uniqueness of classical solutions has been established a while ago in a more general setting (cf.~\cite{Amann_1990_II}), the questions of global-in-time regularity and qualitative properties of weak solutions 
	continue to pose a significant challenge due to non-uniform parabolicity and missing maximum principles for systems.
	From a modelling point of view such systems can naturally be derived from lattice systems for multiple species with size exclusion, which found applications in particular in cell biology (cf. \cite{burger2012nonlinear,noureen2021swapping,painter2009continuous,	simpson2009multi}). 
	
	In this paper we will revisit the canonical macroscopic cross-diffusion system arising from these models and derive new results both on the global existence of weak solutions as well as their stability and uniqueness (in the sense of a weak-strong stability result).
	For $T^*\in(0,\infty]$, $n\in\mathbb{N}$, a smooth bounded domain $\Om\subset\mathbb{R}^d$, $d\in\mathbb{N}$,
	and constant coefficients $K_{ij}$,  $i,j\in\{0,\dots,n\}$, 
	we consider the cross-diffusion system
	\mathtoolsset{showonlyrefs=false}
	\begin{subequations}\label{eq:sys.all}
	\begin{align}\label{eq:sys}  
			\partial_t u_i - \nabla \cdot \left[\sum_{j=0}^n K_{ij} (u_j\nabla u_i-u_i\nabla u_j)\right] = 0  \quad &\text{ in }(0,T^*) \times \Omega,\quad i\in\{0,\ldots, n\},\qquad
	\end{align}
	supplemented by no-flux boundary conditions
	\begin{align}\label{eq:sys.bc}
	\nu\cdot\sum_{j=0}^n K_{ij} (u_j\nabla u_i-u_i\nabla u_j) = 0 \quad &\text{ on }(0,T^*)\times \partial\Omega,\quad i\in\{0,\ldots, n\},\quad
	\end{align}
\end{subequations}\mathtoolsset{showonlyrefs}with $\nu$ denoting the outward unit normal to $\partial\Om$.
 Here, $u_i=u_i(t,x)\ge0$ denotes the density of the $i$-th species $X_i$.
Since effectively only off-diagonal entries of $(K_{ij})_{ij}$ enter into~\eqref{eq:sys},~\eqref{eq:sys.bc}, we may assume,  without loss of generality,  that $K_{ii}=0$ for all $i\in\{0,\dots,n\}$, a convention to be adopted throughout this paper.

Supposing the natural symmetry hypothesis $K_{ij}=K_{ji}$, the pointwise sum $\sum_{i=0}^nu_i$ of all densities is formally preserved under the evolution.
Assuming, for simplicity, that $\sum_{i=0}^n{u_i}_{|t=0}\equiv 1$, 
  we are thus interested in solutions $u=(u_0,\dots,u_n)$ of the above PDE system taking values in
	the planar hypersurface
	\begin{align}
		\hypersurf:= \left\{ u':=(u_0',\ldots,u_n') \in (0,\infty)^{n+1}:\; \sum_{i=0}^n u_i' = 1\right\}.
	\end{align}

Our study is based on the well-known observation (cf. \cite{BDiPS_2010,liero2013gradient,ZJ_2017}) that system~\eqref{eq:sys.all} can be written as a gradient flow of the convex integral functional
\begin{align}\label{eq:def.H}
	\mathcal{H}(u)=\int_\Om h(u)\,\dd x, \qquad h(u)=\sum_{i=0}^n \lambda(u_i)
\end{align}
with $\lambda(s):=s\log s-s+1$, 
that takes the form
	\begin{align}\label{eq:sys.gf}
	\begin{aligned}
		\partial_t u_i - \nabla \cdot \left(\sum_{l=0}^n\mathbb{M}_{il}(u)\nabla D_lh(u)\right)= 0  \quad &\text{ in }(0,T^*) \times \Omega,\quad i\in\{0,\ldots, n\}
	\end{aligned}
\end{align}
for a suitable symmetric and positive semi-definite mobility 
$\mathbb{M}:\hsurfCl\to \mathbb{R}^{(n+1)\times(n+1)}$.

To see this, we first rewrite equation~\eqref{eq:sys} more concisely as 
$$\partial_tu_i-\nabla\cdot(\sum_jA_{ij}(u)\nabla u_j)=0$$ with the matrix $A=(A_{ij})_{i,j=0,\dots,n}$ given by 
 \begin{align}\label{eq:defA}
 	A_{ij}(u) = \delta_{ij}\sum_{k=0}^nK_{ik}u_k-K_{ij}u_i.
 \end{align}
Here, $\delta_{ij}$ denotes the Kronecker symbol. Then, defining
 the \textit{mobility matrix} $\mathbb{M}(u):=A(u)(D^2h(u))^{-1}$,
 system~\eqref{eq:sys} turns into~\eqref{eq:sys.gf}.
For later reference, we note that the components of $\mathbb{M}(u)$ take the form
\begin{align}\label{eq:mobility}
	\mathbb{M}_{il}(u)=u_i(\delta_{il}\sum_{k=0}^nK_{ik}u_k - K_{il}u_l),
\end{align}
from which we see in particular that $\mathbb{M}(u)$ is symmetric for all $u\in\hsurfCl$ if and only if $(K_{ij})$ is.
Furthermore, as we will see in Section~\ref{ssec:matrix},  for $(K_{ij})$ symmetric
the positive semi-definiteness of $\mathbb{M}(u)$ for all $u\in\hsurfCl$
is equivalent to the non-negativity of $K_{ij}$ for all $i\not=j$.
This motivates imposing the following mild hypotheses on the coefficients $K_{ij}$.

\smallskip

	\paragraph{\bf Hypotheses}
	\begin{enumerate}[label=\textup{(H\arabic*)}]
		\item\label{it:K.sp} Symmetry:  $K_{ij}=K_{ji}$ for all $i,j\in\{0,\dots, n\}$, $i\not=j$.
		\item\label{it:K.nn} Non-negativity:  $K_{ij}\ge0$ for all $i,j\in\{0,\dots,n\}$, $i\not=j$.
	\end{enumerate}
 Note that from a microscopic point of view the $K_{ij}$ are proportional to the rate at which particles of type $i$ and $j$ change sites and hence are non-negative and symmetric. 
 In particular, hypotheses~\ref{it:K.sp} \&~\ref{it:K.nn} are fulfilled in all relevant applications and will therefore be assumed throughout this manuscript without necessarily being referred to.
 The symmetry of the $K_{ij}$ implies that, upon adding up the equations~\eqref{eq:sys}, 
$$ \frac{\dd}{\dd t}\sum_{i=0}^nu_i =\sum_{i=0}^n \partial_t u_i = 0, $$
and the non-negativity of $K_{ij}$ can be used to conclude preservation of non-negativity of each $u_i$, hence they are directly related to obtaining a solution in $\hsurfCl$.
More specifically, under hypotheses~\ref{it:K.sp},~\ref{it:K.nn}  and an additional non-degeneracy condition on $\{K_{ij}\}$ (such as~\ref{it:K.pos} or~\ref{it:ok} below), a strong maximum-type principle (cf.\ e.g.~\cite[Chapter~III]{DanersKMedina_1992},~\cite{QS_2019_book}) ensures that smooth flows starting in the surface $\hypersurf$ will not leave it as long as they maintain their regularity. 

Our analysis further requires some non-degeneracy condition on the coefficients $K_{ij}$.
While existence and equilibration will be established under a rather mild condition (to be specified in Section~\ref{sec:ex}, see~\ref{it:ok}), our stability analysis in Section~\ref{sec:stab} requires in addition to~\ref{it:K.sp} and~\ref{it:K.nn} the \textit{full-interaction} hypothesis
\begin{enumerate}[label=\textup{(H2$^*$)}]
	\item\label{it:K.pos} Positivity:  $K_{ij}>0$ for all $i,j\in\{0,\dots,n\}$, $i\not=j$.
\end{enumerate}
Note that from a microscopic point of view, the strengthened condition \ref{it:K.pos} means that all combinations of different particles can change sites and hence there is no situation where particles can get stuck.
In this context, let us recall that the diagonal values $K_{ii}$ are irrelevant for the dynamics and have been set to zero to simplify notations.

\paragraph{\bf Approach}
Exploiting the formal gradient-flow structure~\eqref{eq:sys.gf} is a key feature of our analysis.
At a formal level, \eqref{eq:sys.gf} implies the following basic entropy dissipation law
\begin{align}\label{eq:apriori}
	\frac{\dd}{\dd t}\mathcal{H}(u) + \int_\Om \frac{1}{2}\sum_{i,j=0}^nK_{ij}u_iu_j\left|\frac{\nabla u_i}{u_i}-\frac{\nabla u_j}{u_j}\right|^2\,\dd x = 0
\end{align}
along solutions $u=u(t,x)\in \mathcal{\ol S}$,
which is the main building block in our global existence and long-time asymptotic analysis. The latter will be carried out under a hypothesis far weaker than~\ref{it:K.pos} (cf.\ Section~\ref{sec:ex},~\ref{it:ok}).
To explain the new difficulties arising when relaxing~\ref{it:K.pos}, let us note  that hypothesis~\ref{it:K.pos} ensures that the dissipated quantity $\int_\Om D$, where
$$ D:=\frac{1}{2}\sum_{i,j=0}^nK_{ij}u_iu_j\left|\frac{\nabla u_i}{u_i}-\frac{\nabla u_j}{u_j}\right|^2,$$ 
provides $L^2$ control of the Sobolev gradient of $\sqrt{u_i}$, since then\footnote{In fact, by inequality~\eqref{eq:ffGrad} in Lemma~\ref{l:M.coerc} one has $D\ge4\min_{\{\alpha,\ell:\alpha\not=\ell\}}K_{\alpha\ell}\cdot\sum_{i=0}^n|\nabla\sqrt{u_i}|^2$.} 
$D\gtrsim \sum_{i=0}^n|\nabla\sqrt{u_i}|^2$.
Without hypothesis~\ref{it:K.pos} this bound is no longer valid and
the dissipation law~\eqref{eq:apriori} provides only poor gradient control of solutions. Hence, we cannot rely on standard Sobolev space theory and compactness arguments to pass to the limit in the approximation scheme. 
Indeed, taking for instance $K_{i0}>0$ and $K_{ij}=0$ for all $i,j\in\{1,\dots,n\}$, one even has 
\begin{align}
	\sum_{i,j=0}^nK_{ij}u_iu_j\left|\frac{\nabla u_i}{u_i}-\frac{\nabla u_j}{u_j}\right|^2
\le K\left(\frac{1}{u_0}|\nabla u_0|^2+u_0\sum_{j=0}^n\frac{1}{u_j}|\nabla u_j|^2\right),
\end{align}
where $K:=\max_{1\le i\le n}K_{0i}$, so that any gradient control of $u_j$, $j\ge1$, obtained from~\eqref{eq:apriori}
 is lost when $u_0$ vanishes. (This bound follows from an estimate analogous to that in the proof of Lemma~\ref{l:M.coerc}.)

For proving our weak-strong uniqueness and stability result
(with the relative entropy as a generalised distance), the dissipation property~\eqref{eq:apriori} is not sufficient. Here, we follow~\cite{Hopf_prepr2021} and rely on the structure~\eqref{eq:sys.gf} for controlling the relative entropy of a weak solution with respect to a strong solution. Developing further the approach in~\cite{Hopf_prepr2021}, we will see that 
weak-strong stability is mostly a consequence of the general structural properties of~\eqref{eq:sys.gf}: apart from the smoothness properties of $h$ and $\mathbb{M}$, the strict convexity\footnote{In this article, \textit{strict convexity} of the function $h\in C^2(\hypersurf)$ is to be understood in the sense of the Hessian $D^2h(u)$ being positive definite at every point $u\in \hypersurf$.} of the entropy, and the parabolicity of the gradient-flow evolution system~\eqref{eq:sys.gf}, 
we only rely on the a priori estimate~\eqref{eq:apriori} providing sufficient gradient control.
Precisely because of this last prerequisite
our stability analysis is confined to models $\{K_{ij}\}$ obeying the full-interaction hypothesis~\ref{it:K.pos}. However, it significantly improves the perturbative results of \cite{berendsen2020}, which are based on the same assumption, but also need all values $\{K_{ij}\}_{i\not= j}$ to be close to some positive constant.

A primary purpose of the present manuscript is to expose key aspects of the entropy method for (degenerate) cross-diffusion systems such as~\eqref{eq:sys.all} rather than aim for 
optimal estimates, and we would like to mention that, in the case of full interactions,
a version of the weak-strong stability estimate~\eqref{eq:stab.f} may be obtained under relaxed hypotheses on the strong solution and the initial data. In particular, using a suitably adapted relative entropy as recently introduced in~\cite{BRZ_preprint}, it might be possible to admit general non-negative initial data that are not necessarily strictly positive. 
Let us further note that our existence theory and
the weak-strong stability estimate are
 equally valid in the presence of external potentials, that is, for models~\eqref{eq:sys.gf} where the Boltzmann-type entropy $\sum_i\lambda(u_i)$ is enhanced by 
a linear potential part $\sum_{i}u_i\Upsilon_i$ 
 for given smooth potentials $\Upsilon_i:\ol\Om\to\mathbb{R}$.
This leads to evolution systems~\eqref{eq:sys} of the form 
\begin{align}
		\partial_t u_i - \nabla \cdot 
		\big[\sum_{j=0}^n K_{ij} (u_j\nabla u_i-u_i\nabla u_j + u_iu_j\nabla(\Upsilon_i-\Upsilon_j))\big] = 0.
\end{align}
An extension to mean-field interaction (cf.\ e.g.~\cite{berendsen2017cross}) also seems possible in certain regimes.
Let us finally mention a parallel development~\cite{HJT_preprint} covering Maxwell--Stefan diffusions in a non-degenerate regime (corresponding to the case of full interactions).

\subsection{Main results}
Our main results can be divided into two parts, contained in Sections~\ref{sec:stab} and~\ref{sec:ex}, respectively.
\begin{enumerate}[label=(\roman*)]
	\item\label{it:part.stab} The first part concerns fully interacting systems~\ref{it:K.sp}, \ref{it:K.pos}:
 Here, gradient bounds are available globally and existence of weak solutions is well established~\cite{Juengel_2015}. In this setting, we are able to perform a fairly general weak-strong stability and asymptotic analysis.
	\item\label{it:part.pint} The second part considers more general partially interacting systems.  These are quite relevant for applications, but due to the degeneracy of the parabolic structure even the issue of existence in a weak sense has not been fully understood so far. 
	We establish the existence of weak solutions (in a slightly generalised sense) as well as their convergence to the steady state in the long-time limit. 
\end{enumerate}

\subsubsection{Results  on full interactions in part~\ref{it:part.stab}}

Let $T^*\in(0,\infty]$.
\begin{definition}[Weak solution]\label{def:weak.sol}
	We call $u=(u_0,\dots,u_n)$ a weak solution of~\eqref{eq:sys.all} with  if $u_i\in L^2(0,T;H^1(\Om))$, $\partial_tu_i\in L^2(0,T;(H^1(\Om))^*)$ for all $T<T^*$ and $i\in\{0,\dots,n\}$, if  $u(t,x)\in \hsurfCl$ for a.e.\ $(t,x)\in(0,T^*)\times\Om$, and if 
	$u$ satisfies~\eqref{eq:sys.all} in the weak sense, i.e.\ if for all $T<T^*$, all 
	$\psi\in L^2(0,T;H^1(\Om))$ and all $i\in\{0,\dots, n\}$
	\begin{align}\label{eq:weak}
		\int_0^T\langle\partial_tu_i,\psi\rangle\,\dd t 
		+	\int_0^T\!\int_\Om \sum_{j=0}^nA_{ij}(u)\nabla u_j\cdot\nabla\psi\,\dd x\dd t =0.
	\end{align}
\end{definition}
Observe that under the hypotheses of Definition~\ref{def:weak.sol} one has $0\le u_i\le 1$ for all $i$.

For systems~\eqref{eq:sys.all} obeying hypothesis~\ref{it:K.pos}, we perform a weak-strong stability analysis choosing the relative entropy 
	\begin{align}\label{eq:def.H.rel}
	\mathcal{H}_\rel(u,\tilde u)&:=\int_\Om h_\rel(u,\tilde u)\,\dd x := \int_\Om \big[h(u)-\sum_{i=0}^nD_i h(\tilde u)(u_i-\tilde u_i)-h(\tilde u)\big]\,\dd x\quad
\end{align} 
as a generalised distance  between a weak solution $u$ and some strictly positive reference solution $\tilde u$ that enjoys certain extra regularity properties. 
Our most interesting result under the full-interaction hypothesis consists in a weak-strong stability estimate, where `strong solutions' are defined in the following way.

\begin{definition}[Strong solution]\label{def:strong.sol}
	We call $\tilde u=(\tilde u_0,\dots,\tilde u_n)$ a strong solution of~\eqref{eq:sys.all} if 
	$\tilde u\in C^{0,1}([0,T^*)\times\ol\Om)^{1+n}$,
	if $\tilde u(t,x)\in \hypersurf$ for all $(t,x)\in[0,T]\times\ol\Om$ and all $T<T^*$,
	and if $\tilde u$ satisfies \eqref{eq:sys.all} in the weak sense (as specified in Definition \ref{def:weak.sol}).
\end{definition}
Observe that our notion of a strong solution not only requires Lipschitz regularity, but also strict positivity of all components $\tilde u_i$ on $[0,T^*)\times\ol\Om$
and as a consequence $\min_{[0,T]\times\ol\Om}\tilde u_i>0$ for all $T<T^*$ and all $i\in\{0,\dots,n\}$.
Let us further remark that under hypotheses~\ref{it:K.sp},~\ref{it:K.pos} (and more generally under~\ref{it:K.sp},~\ref{it:K.nn} and the condition~\ref{it:ok} to be introduced in Section \ref{sec:ex})
 for given Lipschitz continuous initial data $\uin$ with $\uin(\ol\Om)\subset\hypersurf$, local existence of a unique strong solution follows from Amann~\cite{Amann_1990_II} and the parabolic structure of the system (see Section~\ref{sec:entropy} for details). 

Our weak-strong uniqueness and stability result states as follows.
\begin{theorem}[Weak-strong stability]\label{thm:uniq}
	Assume hypotheses~\ref{it:K.sp},~\ref{it:K.pos} and 
	suppose that $\tilde u\in C^{0,1}([0,T^*)\times\ol\Om)^{1+n}$  is a strong solution in the sense of Definition~\ref{def:strong.sol}.
	For all $T\in(0,T^*)$ there exists a constant $C_{T,\tilde u}<\infty$ only depending on
	$\|\tilde u\|_{C^{0,1}([0,T]\times\ol\Om)}$ and $(\inf_{[0,T]\times\ol\Om} \tilde u_i)_{i=0}^n$ such that the following holds true:
	
	any weak solution $u$ in the sense of Definition \ref{def:weak.sol} that has the additional regularity $\sqrt{u_i}\in L^2_\loc([0,T^*);H^1(\Om))$ for all $i \in \{0,\ldots,n\}$ obeys the stability estimate
	\begin{align}\label{eq:stab.f}
		\mathcal{H}_\rel(u(t),\tilde u(t))\le \mathcal{H}_\rel(u(0),\tilde u(0)) \exp(C_{T,\tilde u}t)\quad\text{ for al }t\in[0,T],
	\end{align}
	where $\mathcal{H}_\rel$ denotes the relative entropy functional~\eqref{eq:def.H.rel}.
\end{theorem}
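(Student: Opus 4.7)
The plan is to run a relative-entropy calculation à la \cite{Hopf_prepr2021}: differentiate $\mathcal H_\rel(u(t),\tilde u(t))$ along the two flows, exploit the gradient-flow structure \eqref{eq:sys.gf} to extract a non-negative \emph{relative dissipation} term, and control all remaining contributions by $C_{T,\tilde u}\,\mathcal H_\rel(u,\tilde u)$ via the Lipschitz bound and strict positivity of $\tilde u$, so that Gr\"onwall's lemma gives \eqref{eq:stab.f}. Since $h(u)=\sum_i\lambda(u_i)$ is strictly convex with $D_ih(u)=\log u_i$, $D^2_{ij}h(u)=\delta_{ij}/u_i$, the relative entropy reduces to $\int_\Om\sum_i u_i\log(u_i/\tilde u_i)\,\dd x$ (using $\sum_iu_i=\sum_i\tilde u_i=1$), which is equivalent to a sum of squared Hellinger distances $\sum_i|\sqrt{u_i}-\sqrt{\tilde u_i}|^2$ up to constants depending on $\min\tilde u_i$ — this is the quantity I ultimately want to close Gr\"onwall on.

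\textbf{The core identity.} Formally,
\[
\tfrac{\dd}{\dd t}\mathcal H_\rel(u,\tilde u)=\int_\Om\sum_i(D_ih(u)-D_ih(\tilde u))\partial_tu_i\,\dd x-\int_\Om\sum_iD^2_{ij}h(\tilde u)\partial_t\tilde u_j(u_i-\tilde u_i)\,\dd x.
\]
Plugging in the weak formulation \eqref{eq:weak} for both $u$ and $\tilde u$ (testing the $u_i$-equation against $\log(u_i/\tilde u_i)$ and the $\tilde u_i$-equation against $(u_i-\tilde u_i)/\tilde u_i$), the first term becomes
\[
-\int_\Om\sum_{i,l}\mathbb M_{il}(u)\nabla D_lh(u)\cdot\nabla(D_ih(u)-D_ih(\tilde u))\,\dd x,
\]
and, after adding and subtracting $\mathbb M(\tilde u)\nabla D h(\tilde u)$, I plan to rearrange the whole right-hand side into the sum of
\[
-\int_\Om\sum_{i,l}\mathbb M_{il}(u)\nabla\bigl(D_lh(u)-D_lh(\tilde u)\bigr)\cdot\nabla\bigl(D_ih(u)-D_ih(\tilde u)\bigr)\,\dd x\le 0
\]
(the relative dissipation, non-positive by semi-definiteness of $\mathbb M(u)$) plus two error terms: (E1) a cross term of the form $\int_\Om(\mathbb M(u)-\mathbb M(\tilde u))\nabla Dh(\tilde u)\cdot\nabla(Dh(u)-Dh(\tilde u))\,\dd x$, and (E2) a term involving $\sum_{i,j}D^2_{ij}h(\tilde u)\partial_t\tilde u_j(u_i-\tilde u_i)$ that, via the strong equation, can itself be rewritten in terms of $\nabla\tilde u$ and $(u-\tilde u)$.

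\textbf{Estimating the error terms.} Using the explicit form \eqref{eq:mobility}, $\mathbb M_{il}(u)-\mathbb M_{il}(\tilde u)$ is polynomial (in fact quadratic) in $u,\tilde u$, hence linear in $u-\tilde u$ when $\tilde u$ is held fixed. Combined with $\nabla D_lh(u)-\nabla D_lh(\tilde u)=\nabla u_l/u_l-\nabla\tilde u_l/\tilde u_l$, term (E1) splits into pieces of the form $\int_\Om(u-\tilde u)\,\nabla\tilde u\,(\nabla u/u-\nabla\tilde u/\tilde u)\,\dd x$. Since $\tilde u$ is Lipschitz and bounded below, $\|\nabla\tilde u\|_\infty+\|1/\tilde u_i\|_\infty\le C_{T,\tilde u}$, so by Cauchy–Schwarz each such piece is bounded by $\eta$ times a fraction of the relative dissipation plus $C_\eta C_{T,\tilde u}^2\int_\Om|u-\tilde u|^2\,\dd x$. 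Choosing $\eta$ small enough to absorb into the non-positive relative dissipation, and invoking the pointwise inequality $|u_i-\tilde u_i|^2\lesssim h_\rel(u,\tilde u)\cdot(1+|\log\tilde u_i|)$ on the compact range of $\tilde u_i$ (a standard consequence of the strict convexity of $\lambda$ together with the Csisz\'ar–Kullback–Pinsker-type lower bound $h_\rel\gtrsim|\sqrt u-\sqrt{\tilde u}|^2$), one obtains $|$(E1)$|\le\eta\cdot(\text{dissipation})+C_{T,\tilde u}\,\mathcal H_\rel(u,\tilde u)$. Term (E2) is handled analogously, producing only $C_{T,\tilde u}\,\mathcal H_\rel(u,\tilde u)$. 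Gr\"onwall's inequality then gives \eqref{eq:stab.f}.

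\textbf{Rigorous justification.} Since $u$ is only a weak solution, testing with $\log(u_i/\tilde u_i)$ is not directly admissible; the regularisation $\log((u_i+\delta)/\tilde u_i)\in L^2(0,T;H^1(\Om))$ (which is admissible because $\nabla u_i\in L^2$ and $\tilde u_i$ is smooth and bounded away from zero) must be used, followed by $\delta\downarrow 0$. The main technical obstacle — and the reason the extra hypothesis $\sqrt{u_i}\in L^2_\loc([0,T^*);H^1(\Om))$ enters — is controlling the integrand $|\nabla u_i/\sqrt{u_i}|^2=4|\nabla\sqrt{u_i}|^2$ appearing in the relative dissipation uniformly in $\delta$ so that the dissipation term survives the limit and stays non-positive; this is where the full-interaction hypothesis \ref{it:K.pos} (via the footnote bound $D\gtrsim\sum_i|\nabla\sqrt{u_i}|^2$) becomes essential, ensuring that every piece of the error estimate sits inside an integrable quantity. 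Once the limit $\delta\downarrow 0$ is justified and the estimates above are combined on a.e.\ time slice, integration from $0$ to $t$ and Gr\"onwall's lemma close the argument.
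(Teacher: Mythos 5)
Your overall framework—differentiate $\mathcal H_{\rel}$, use the gradient-flow structure to pull out a signed relative dissipation, absorb the cross term by Cauchy–Schwarz, and close with Gr\"onwall—is the right skeleton, and it matches the paper's approach at that level. However, the key absorption step in your treatment of (E1) has a genuine gap which is exactly the new difficulty the paper has to overcome.

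The problem is that the weight in your relative dissipation,
$\int\sum_{i,l}\mathbb M_{il}(u)\nabla(D_lh(u)-D_lh(\tilde u))\cdot\nabla(D_ih(u)-D_ih(\tilde u))$,
degenerates as any component $u_i\downarrow0$: from~\eqref{eq:mobility}, $\mathbb M_{il}(u)$ carries a factor $u_i$, so the dissipation only controls $u_i\big|\tfrac{\nabla u_i}{u_i}-\tfrac{\nabla\tilde u_i}{\tilde u_i}\big|^2$, not the unweighted square. In contrast, the coefficient in (E1), namely $\mathbb M(u)-\mathbb M(\tilde u)$, does \emph{not} vanish as $u_i\downarrow0$ (it tends to $-\mathbb M(\tilde u)\neq0$ there, since $\tilde u$ is strictly positive). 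Any Cauchy–Schwarz of the form $|a\cdot b|\le\eta\, b^T\mathbb M(u)b+C_\eta\, a^T\mathbb M(u)^{-1}a$ therefore produces a weight $\mathbb M(u)^{-1}\sim u_i^{-1}$ on a quantity that does not vanish at the same rate, and the ``$C_\eta\|u-\tilde u\|_{L^2}^2$'' upper bound you assert simply does not follow. The extra regularity $\sqrt{u_i}\in L^2_\loc H^1$ gives you integrability of $\nabla\sqrt{u_i}$, but it does not rescue this particular absorption, because you still need to compare two weights that scale differently near $\partial\hypersurf$.

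The paper closes this gap with two devices you do not mention. First, it passes to the reduced $n$-variable coordinates $U=(u_1,\dots,u_n)$ and the entropy $\hh$, for which $D^2\hh$ is invertible and $\MM$ is positive definite on compacta of $\mathcal D$ (see~\eqref{eq:M.nd}); this removes the ``kernel-in-direction-$\surfnormal$'' degeneracy at the outset. Second—and decisively—it splits pointwise into a region where $\min_i u_i>\iota$ and one where $\min_i u_i\le\iota$. Away from the boundary the argument is exactly a quantitative version of yours, but it hinges on the new coercivity estimate~\eqref{eq:108},
$\sum_i|\nabla D_i\hh(U)-\nabla D_i\hh(\tilde U)|^2\gtrsim|\nabla u-\nabla\tilde u|^2-C|u-\tilde u|^2$,
deduced from strict convexity of $\hh$; this converts the (now non-degenerate) relative dissipation into genuine gradient control of $u-\tilde u$, which is what absorbs the Young/Cauchy–Schwarz error. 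Near the boundary, i.e.\ when some $u_i\le\iota<\tfrac12\min\tilde u_i$, one has $|u-\tilde u|\gtrsim_\iota1$ and hence $h_\rel(u,\tilde u)\gtrsim_\iota1$, so it suffices to show the much cruder one-sided bound $\rho\le C_{T,\tilde u}$, which is obtained from the coercivity of Corollary~\ref{cor:hM.coerc} (giving $-c\sum_i|\nabla\sqrt{u_i}|^2$) and boundedness of $u$. Without this case distinction and without an analogue of~\eqref{eq:108}, your argument does not close. The regularisation $\delta\downarrow0$ and the choice of test functions in your last paragraph are fine; the paper circumvents that technicality by invoking the separately proved entropy dissipation identity of Proposition~\ref{prop:edi}.
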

If $u_{|t=0}=\tilde u_{|t=0}$,  the above theorem implies that $u=\tilde u$ a.e.\ in~$(0,T^*)\times\Om$.

See Section~\ref{ssec:stab.est} for the proof of Theorem~\ref{thm:uniq}. 
The regularity hypothesis $\sqrt{u_i}\in L^2_\loc([0,T^*);H^1(\Om))$ in Theorem~\ref{thm:uniq} is natural under hypothesis~\ref{it:K.pos}, 
and the existence of weak solutions obeying this hypothesis is well established, see Theorem~\ref{thm:ex.full} below.

For completeness, we further note the following asymptotic stability result, which  follows from classical arguments in the full-interaction regime.
\begin{theorem}[Exponential stability of steady states]\label{thm:stab.ss}
	Assume hypotheses~\ref{it:K.sp},~\ref{it:K.pos} and let $\ol u\in\hsurfCl$.
	There exist finite positive constants $\epsilon=\epsilon(\Om)$ and $C=C(\Om)$
	such that any weak solution $u=(u_0,\dots,u_n)$ of~\eqref{eq:sys.all} in the sense of Definition~\ref{def:weak.sol} 
	with the extra regularity $\sqrt{u_i}\in L^2_\loc([0,T^*);H^1(\Om))$ 
	and average values $\frac{1}{|\Om|}\int_\Om u_i(0)\,\dd x=\ol u_i$
	for all $i\in\{0,\dots,n\}$ satisfies the decay estimate 
	\begin{align}\label{eq:stab.ss}
		\|u(t)-\ol u\|_{L^1(\Om)}^2\le C\mathcal{H}_\rel(u(0),\ol u)\mathrm{e}^{-\epsilon \kappa t}\qquad\text{ for all }t>0,
	\end{align}
 where $\kappa:=\min_{i\not=j}K_{ij}>0$.
\end{theorem}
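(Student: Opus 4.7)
The plan is to combine the basic entropy-dissipation identity~\eqref{eq:apriori} with a Neumann logarithmic Sobolev inequality on $\Om$ and close with the Csisz\'ar--Kullback--Pinsker inequality. First, because each equation in~\eqref{eq:sys.all} is a conservation law with no-flux boundary conditions, the spatial mean of every $u_i$ is preserved in time, so $\int_\Om(u_i(t)-\ol u_i)\,\dd x=0$ for all $t>0$. The linear term in the definition~\eqref{eq:def.H.rel} of $\mathcal{H}_\rel$ therefore integrates to zero and we obtain $\mathcal{H}_\rel(u(t),\ol u)=\mathcal{H}(u(t))-\mathcal{H}(\ol u)$. Using the regularity hypothesis $\sqrt{u_i}\in L^2_\loc([0,T^*);H^1(\Om))$ to justify the chain-rule computation, together with the footnoted estimate $D\ge 4\kappa\sum_{i=0}^n|\nabla\sqrt{u_i}|^2$, the entropy law~\eqref{eq:apriori} then yields
\begin{align}
	\frac{\dd}{\dd t}\mathcal{H}_\rel(u(t),\ol u) \le -4\kappa \sum_{i=0}^n\int_\Om|\nabla\sqrt{u_i}|^2\,\dd x
\end{align}
for a.e.\ $t>0$, the inequality for weak solutions being obtained as in the existence theory (cf.~\cite{Juengel_2015}) via a standard approximation/mollification argument.

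Second, the smoothness of $\Om$ gives rise to a Neumann-type logarithmic Sobolev inequality: for each $i\in\{0,\dots,n\}$,
\begin{align}
	\int_\Om u_i\log\frac{u_i}{\ol u_i}\,\dd x \le C_\mathrm{LS}(\Om)\int_\Om|\nabla\sqrt{u_i}|^2\,\dd x.
\end{align}
Such an inequality is standard on smooth bounded domains and can be deduced from the Poincar\'e inequality together with the $L^\infty$-bound $u_i\le 1$ via a Rothaus-type argument. Summing over $i$ and using the mean preservation already noted, the left-hand side equals exactly $\mathcal{H}_\rel(u(t),\ol u)$, so one arrives at the linear differential inequality
\begin{align}
	\frac{\dd}{\dd t}\mathcal{H}_\rel(u(t),\ol u) \le -\frac{4\kappa}{C_\mathrm{LS}(\Om)}\mathcal{H}_\rel(u(t),\ol u).
\end{align}
Gr\"onwall's lemma then converts this into exponential decay of $\mathcal{H}_\rel$ at rate $\epsilon\kappa$ with $\epsilon:=4/C_\mathrm{LS}(\Om)$.

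Third, to pass from relative entropy to $L^1$-distance I apply the Csisz\'ar--Kullback--Pinsker inequality component-wise: renormalising $u_i$ and the constant $\ol u_i$ by $\ol u_i|\Om|$ to densities of probability measures on $\Om$ gives
\begin{align}
	\|u_i(t)-\ol u_i\|_{L^1(\Om)}^2 \le 2\ol u_i|\Om|\int_\Om u_i\log\frac{u_i}{\ol u_i}\,\dd x,
\end{align}
and summation together with $\ol u_i\le 1$ and Cauchy--Schwarz produces $\|u(t)-\ol u\|_{L^1(\Om)}^2\le C(\Om)\,\mathcal{H}_\rel(u(t),\ol u)$. Combining with the exponential decay from the previous step proves the claim. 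The main technical subtlety is the rigorous verification of the entropy-dissipation identity for weak solutions rather than strong ones: this is exactly where the hypothesis $\sqrt{u_i}\in L^2_\loc([0,T^*);H^1(\Om))$ is needed, as it makes the dissipation term in~\eqref{eq:apriori} integrable and allows a standard chain-rule/mollification procedure to go through; beyond that the argument is the classical entropy-method machinery for Neumann parabolic problems.
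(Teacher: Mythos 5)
Your proof is correct and follows essentially the same route as the paper's: mean preservation plus the identity $\mathcal{H}_\rel(u(t),\ol u)=\sum_i\int_\Om u_i\log(u_i/\ol u_i)$, the coercivity bound $\scrp(u)\ge 4\kappa\sum_i|\nabla\sqrt{u_i}|^2$ (inequality~\eqref{eq:ffGrad} of Lemma~\ref{l:M.coerc}), the logarithmic Sobolev inequality, Gronwall, and Csisz\'ar--Kullback. The paper makes the chain-rule step rigorous via Proposition~\ref{prop:edi} (a mollified entropy $h_\ve(u)=\sum_i\lambda(u_i+\ve)$) and then works with the resulting \emph{integral} entropy balance~\eqref{eq:edb} and an integral Gronwall lemma, whereas you phrase the same estimate as a differential inequality for a.e.\ $t$ and cite a mollification argument à la~\cite{Juengel_2015}; these are the same idea, and the integral form the paper uses simply sidesteps any discussion of a.e.\ differentiability of $t\mapsto\mathcal{H}_\rel(u(t),\ol u)$.
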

The proof of Theorem~\ref{thm:stab.ss} is given in Section~\ref{ssec:stab.ss}.

\subsubsection{Results in part~\ref{it:part.pint} on partial interactions}

Since the precise statement of our results concerning partial interactions requires several additional concepts and notations that will be introduced in detail in Section~\ref{sec:ex}, we here prefer to only summarise the main results on part~\ref{it:part.pint}:
\begin{itemize}
	\item Large-data global existence of weak solutions (see Theorem~\ref{thm:pex}),
	\item Convergence to equilibrium of the constructed solutions (see Theorem~\ref{thm:conv.c}).
\end{itemize}
While for specific subclasses of models existence has previously been obtained in the literature (see~\cite{BDiPS_2010,Juengel_2015,ZJ_2017}), our result concerning the long-time asymptotics appears to be the first rigorous one for partially interacting systems.

We should mention that for certain degenerate classes of partially interacting models
the regularity obtained from entropy estimates does not suffice to uniquely determine all
flux terms $K_{ij}(u_j\nabla u_i-u_i\nabla u_j)$
appearing in~\eqref{eq:sys}. In fact, in contrast to the models covered by existing literature~\cite{BDiPS_2010,Juengel_2015,ZJ_2017}, the present existence analysis faces situations where, due to poor gradient control, expressions of the form $K_{ij}u_j\nabla u_i$
may not even be defined in the distributional sense (despite the fact that $0\le u_l\le 1$ for all $l$). 
Hence, our existence result, Theorem~\ref{thm:pex}, is formulated for a somewhat generalised notion of weak solutions that, however, reduces to the standard concept of  weak or distributional solutions when restricting to the models considered in~\cite{BDiPS_2010,Juengel_2015,ZJ_2017}.
Besides, the generalised notion of weak solutions in Theorem~\ref{thm:pex} respects mass conservation and allows to uniquely determine the long-time asymptotic behaviour of solutions (cf.~Theorem~\ref{thm:conv.c}).

\subsection{Previous literature}\label{ssec:lit}
Below, we provide an overview of the analytical results available for system~\eqref{eq:sys.all}. 
We should point out that~\cite{BDiPS_2010} was the first work to analyse such a system (for $n=2$). 
\\[1mm]\noindent 
Literature on global existence:
\begin{itemize}
	\item Full interactions: \begin{itemize}
		\item the case $K_{ij}>0$ for all $i\not=j$ has been fully covered by \cite[Theorem~2]{Juengel_2015}, see~\cite{BE_2018} for details.
		\item \cite{BE_2018}: global existence of weak solutions for non-zero flux boundary conditions and time-dependent domains in one space dimension $d=1$. Equilibration for suitable data.
	\end{itemize}
	\item Partial interactions:
	\begin{itemize}
		\item \cite{BDiPS_2010}: global existence of weak solutions for $n=2$ and $K_{i0}>0,K_{ij}=0$ for $j,i\ge1$. 	(Only the case $d\in\{1,2,3\}$ was considered, but the proof does not rely on this restriction.)
	\item \cite[Theorem~3]{Juengel_2015}: global existence of weak solutions again for $n=2$ and $K_{i0}>0,K_{ij}=0$ for $j,i\ge1$, but more general transition rates (not considered in the present manuscript).
	\item \cite{ZJ_2017}: global existence of weak solutions for general $n\ge2$ and $K_{i0}>0,K_{ij}=0$ for $j,i\ge1$.
\end{itemize}
\end{itemize}
Literature on convergence to equilibrium (partial interactions): 
\begin{itemize}
	\item 	\cite[Section 5]{BDiPS_2010}: non-rigorous sketch proof that assumes monotonicity of the entropy.\footnote{The argument can be made rigorous by following the ideas in Section~\ref{sec:ex}, and in particular the proof of Theorem~\ref{thm:conv.c}.}
	\item  \cite[Theorem 4]{ZJ_2017}: attempt to rely on decoupling to deduce an exponential convergence rate. (However, the crucial Gronwall type argument therein appears to be false and fixing it would require a new idea.) 
\end{itemize}
Literature on uniqueness (partial interactions): 
\begin{itemize}
	\item \cite[Theorem 5]{ZJ_2017}: uniqueness of weak solutions in a situation where the problem essentially decouples ($K_{0i}=a_0>0$ for all $i\in\{1,\dots,n\}$ and $K_{ij}=0$ for $i,j\ge 1$).
\end{itemize}
Let us finally note that in perturbative or small data settings classical techniques apply. Such regimes are mathematically better behaved and easier to handle:
\begin{itemize}
	\item \cite[Section 3]{BDiPS_2010}: wellposedness of strong solutions near equilibrium.
	\item 	\cite{berendsen2020}:  existence and uniqueness of strong solutions when the interaction coefficients $K_{ij}, i\not=j$ are positive and close to each other. In this case, the system is close to decoupled linear diffusion, and the authors apply classical $L^2$ methods to study the wellposedness in this perturbative setting.
\end{itemize} 

\subsection{New aspects of the present results}
As discussed in the previous paragraph, so far, uniqueness results have only been available for small data settings or under rather restrictive assumptions on the model.
In the present work we remove such smallness restrictions on the model
and establish a weak-strong stability estimate for system~\eqref{eq:sys.all} with general coefficients satisfying~\ref{it:K.sp},~\ref{it:K.pos}, see Theorem~\ref{thm:uniq}.
Our method relies on an adaptation of the technique in~\cite{Hopf_prepr2021} 
devised for proving weak-strong uniqueness in energy-reaction-(cross-)diffusion systems. This approach in turn was motivated by an adjusted relative entropy method due to Fischer~\cite{Fischer_2017} for reaction-diffusion systems.
A crucial ingredient allowing us to handle the new difficulties induced by the size-exclusion effect is a local coercivity-type estimate (cf.\ inequality~\eqref{eq:108}), which we deduce from the strict convexity of the entropy density.

We further establish existence of weak solutions and equilibration in the long-time limit under a fairly mild hypothesis on $K_{ij}$ arguably covering all models relevant for applications.
Here, our idea is to take advantage of the existence result for full interactions~\cite{Juengel_2015}, which provides us with a family of approximate solutions. Upon an analysis of the compactness and convergence properties of these approximate solutions, we then 
take a \enquote{vanishing-interaction limit}, where the artificially introduced interaction parameters are being sent to zero, to eventually obtain weak solutions to the partially interacting system. Owing to poor compactness properties linked to the degeneracy of such systems, we are not able to establish a strong entropy dissipation inequality nor 
monotonicity of the entropy in time. 
However, we will show that a version of the strong entropy dissipation inequality holds true up to an error term (cf.\ inequality \eqref{eq:90.s}) that is initially only known to tend to zero along a \textit{some} sequence of times $t_k\to\infty$, along which stronger convergence properties hold.
Inequality~\eqref{eq:90.s}, however, allows us to upgrade this result and prove the strong convergence to equilibrium along \textit{any} sequence $t\to\infty$.

\subsection{Outline}\label{ssec:outline}

In Section~\ref{sec:prelim}, we use a change of variables to remove the degeneracy of equation~\eqref{eq:sys.all} that is linked to the volume constraint.
	We further establish basic algebraic identities and coercivity estimates involving the mobility matrix that will be needed in subsequent parts.
 The  stability results for fully interacting systems
  are proved in Section~\ref{sec:stab}.
In Section~\ref{sec:ex}, the full-interaction hypothesis is dropped, and general, partially interacting systems relevant for applications are introduced. For these problems, we establish existence of weak solutions and convergence to equilibrium.

This paper has two appendices. Appendix~\ref{app:aux.est} contains a technical lemma used in the proof of the weak-strong stability estimate.
For details concerning  our notations we refer to Appendix~\ref{app:not}.

\section{Preliminaries}\label{sec:prelim}

\subsection{The gradient system for \texorpdfstring{$n$}{} species}\label{sec:entropy}

Consistent with the size constraint, the mobility $\mathbb{M}$ underlying system~\eqref{eq:sys.all} degenerates in the direction $\surfnormal:=(1,\dots,1)^T$ normal to the hyperplane containing $\hypersurf$.
This degeneracy can be removed by means of an affine coordinate transformation mapping the $n$-dimensional domain $\mathcal{D}\subset\mathbb{R}^n$, given by
\begin{align}
	\mathcal{D}:= \left\{ \uvec'=(\uvec_1',\ldots,\uvec_n') \in (0,\infty)^{n}:\;\sum_{i=1}^n \uvec_i'< 1\right\},
\end{align}
diffeomorphically onto $\hypersurf$. Such a transformation can be realised by the map
\begin{align}
	\Psi:\mathcal{D}\to\hypersurf, \qquad U=(U_1,\dots,U_n)\mapsto (1-\sum_{i=1}^nU_i,U_1,\dots,U_n).
\end{align}
By means of this change of variables,  the gradient structure with respect to $\hypersurf$ 
gives rise to a gradient flow for the vector of n densities $U(t,x)\in\mathcal{D}$, as will be detailed below.
Of course, we could have immediately written down a PDE system for $U$ equivalent to~\eqref{eq:sys.all} by replacing $u_0$ by $1-\sum_{i=1}^nU_i$ and rearranging terms.
However, we hope that the following few expository paragraphs 
may help better understand some of the underlying structures.

\paragraph{\em Entropy density}
For $U\in \mathcal{D}$ we define the pulled-back entropy density
\begin{align}
	\hh(\uvec) := h(\Psi(U))= \sum_{i=1}^n \lambda(\uvec_i) +  \lambda\big(1-\sum_{i=1}^n \uvec_i\big),
\end{align}
where as before $\lambda(s)=s\log s-s+1$.
Observe that $\hh\in C(\mathcal{\ol D})\cap C^\infty(\mathcal{D})$ is strictly convex.
Abbreviating $v:=1-\sum_{i=1}^n \uvec_i$, we have 
\begin{align}
	D_i\hh(\uvec) = \log(\uvec_i)-\log(v)=\log\big(\tfrac{\uvec_i}{v}\big)
\end{align}
and 
\begin{align}\label{eq:104}
	D^2\hh(\uvec)=\diag(\tfrac{1}{\uvec_1},\dots,\tfrac{1}{\uvec_n}) + \tfrac{1}{v}E,
\end{align}
where $E_{ij}=1$ for all $i,j\in\{1,\dots,n\}$. 

\paragraph{\em Mobility} Since the Onsager operator $\divv(\mathbb{M}(u)\nabla\,\cdot\,)$ is acting on the dual variables, computing the appropriate mobility $\MM$ on $\mathcal{D}$ requires using the inverse $\Phi:=\Psi^{-1}$  of $\Psi$,
\begin{align}
	\Phi:\hypersurf\to\mathcal{D}, \quad u=(u_0,\dots, u_n)\mapsto U=(u_1,\dots, u_n).
\end{align}
We may now define the  $(n\times n)$ mobility matrix
\begin{align}\label{eq:mobili-pb}
\MM(\uvec):=D\Phi_{|\Psi(U)}\mathbb{M}(\Psi(U))D\Phi^T_{|\Psi(U)},\quad U\in\mathcal{D},
\end{align}
which is again symmetric. (Since $\Phi$ is affine, the argument in $D\Phi_{|\Psi(U)}$ may be dropped.)

\paragraph{\em Gradient-flow equation}

The above ingredients determine the gradient-flow equation~\eqref{eq:sys.gf} in the new variables as
\begin{equation}\label{eq:sys.M}
	\left\{
	\begin{array}{rll}
		\partial_t \uvec - \nabla \cdot (\MM(\uvec) \nabla D\hh(\uvec)) &= 0 & \text{ in }\quad (0,T^*) \times \Omega,\\
		\MM(\uvec) \nabla D\hh(\uvec) \cdot \nu&= 0 & \text{ on }\quad (0,T^*) \times \partial\Omega.\\
	\end{array}
	\right.  
\end{equation}
Introducing the \enquote{diffusion matrix} 
\begin{align}
\hA(\uvec):=	\MM(\uvec)D^2\hh(\uvec),
\end{align}
equation \eqref{eq:sys.M} may further be written as the PDE system
\begin{equation}\label{eq:sys.red}
	\left\{
	\begin{array}{rll}
		\partial_t \uvec - \nabla \cdot (\hA(\uvec) \nabla \uvec) &= 0 & \text{ in }\quad (0,T^*) \times \Omega,\\
		(\hA(\uvec)\nabla \uvec) \cdot \nu&= 0 & \text{ on }\quad (0,T^*) \times \partial\Omega.\\
	\end{array}
	\right.  
\end{equation}
A direct computation yields the following explicit form for the matrix $\hA(U)$ 
\begin{equation}\label{eq:A.red}
	\hA_{ij}(\uvec)=\delta_{ij}\left[\sum_{l=1}^n K_{il}u_l+K_{i0}(1-\sum_{l=1}^n u_l)\right]
	-(K_{ij}-K_{i0})u_i,  \quad i,j\in\{1,\ldots, n\}.
\end{equation}
Observe that system~\eqref{eq:sys.red} can also directly be obtained from~\eqref{eq:sys.all} by inserting $u_0=1-\sum_{i=1}^nU_i$ and rearranging terms.

\paragraph{\em Entropy dissipation}

We emphasize that the information contained in~\eqref{eq:sys.M} is the same as in the original system. 
In particular, we assert that 
\begin{align}\label{eq:dq.matrix}
	D^2\hh\MM D^2\hh = D\Psi^T(D^2h\circ\Psi) (\mathbb{M}\circ\Psi) (D^2h\circ\Psi) D\Psi.
\end{align}
This follows from the identities $D^2\hh=D\Psi^T(D^2h\circ\Psi) D\Psi$ and 
\begin{align}
	(D\Psi D\Phi)\mathbb{M}(u)(D\Psi D\Phi)^T = \mathbb{M}(u),
\end{align}
which can be verified explicitly by an elementary calculation using the structure of $\mathbb{M}$.
Hence, as a consequence of~\eqref{eq:dq.matrix}, the dissipated quantities of the flows in transformed and original coordinates are indeed the same:
\begin{align}\label{eq:146}
	\begin{multlined}
		\PP(U):=\sum_{i,j=1}^n\nabla U_i\cdot (D^2\hh(\uvec)\MM(\uvec)D^2\hh(\uvec))_{ij}\nabla U_j
		\\\hspace{.2\linewidth}= \sum_{k,l=0}^n\nabla u_k\cdot (D^2h(u)\mathbb{M}(u)D^2h(u))_{kl}\nabla u_l =:\scrp(u).
	\end{multlined}
\end{align}

\paragraph{\em Parabolic structure}

Assuming a mild non-degeneracy condition on the coefficients $K_{ij}$,
 the problem formulated in terms of $n$ species enjoys full parabolicity in a classical sense, as will be explained in the following.
For simplicity, we only consider the case of a fully interacting system obeying~\ref{it:K.pos}, which guarantees that
$\kappa := \min_{i,j\in\{0,\dots,n\},i\not=j}K_{ij}$ is strictly positive.
 It has been shown in the proof of~\cite[Lemma~2.3]{BE_2018} that 
\begin{align}\label{eq:102}
	\zeta^T D^2\hh(\uvec)\hA(\uvec) \zeta \ge \kappa \sum_{i=1}^n \uvec_i^{-1}\zeta_i^2\qquad\text{ for } U\in\mathcal{D}.
\end{align}
See also Corollary~\ref{cor:hM.coerc} below, which actually implies this  bound (although our reasoning is different and perhaps easier to grasp).
In terms of $\MM$, inequality~\eqref{eq:102} takes the form
\begin{align}\label{eq:102'}
	\zeta^T D^2\hh(\uvec)\MM(\uvec)D^2\hh(\uvec) \zeta \ge  \kappa\sum_{i=1}^n \uvec_i^{-1}\zeta_i^2 \qquad \text{ for } U\in\mathcal{D}.
\end{align}
Since for each $U\in \mathcal{D}$ the matrix $D^2\hh(U)$ is invertible, the last inequality
implies that $\MM(\uvec)>0$ for all $U\in\mathcal{D}$, where we recall that $\MM(\uvec)$ is symmetric. Hence, by continuity,
for every compact subset $\mathcal{K}\subset\subset \mathcal{D}$
there 
exists a positive constant 
$c_*(\mathcal{K})>0$ such that 
\begin{align}\label{eq:M.nd} 
	\MM(\uvec)\ge c_*(\mathcal{K})\diag(1,\dots,1)\qquad\text{ for all }U\in\mathcal{K}.
\end{align}
We further note that, since for every $\uvec\in \mathcal{D}$ the diffusion matrix $\hA(\uvec)$ is the product of the positive definite symmetric matrices $\MM(\uvec)$ and $D^2\hh(\uvec)$, it has its spectrum $\sigma(\hA(\uvec))$ contained in $\mathbb{R}_{>0}$.
Therefore, thanks to Amann's work~\cite{Amann_1990_II}, which requires the real parts of the eigenvectors of $\hA(\uvec)$ to be strictly positive, the Cauchy problem associated with system~\eqref{eq:sys.red} has a unique local-in-time smooth solution for Lipschitz regular initial data $\Uin$ satisfying $\Uin(\ol\Om)\subset\mathcal{D}$.

\begin{rem}
\normalfont In fact, under hypothesis~\ref{it:K.pos}, the inclusion $\sigma(\hA(\uvec))\subset\mathbb{R}_{>0}$ sufficient for applying~\cite{Amann_1990_II} 
even holds for all $\uvec\in \mathcal{\ol D}$ 
(and thus by continuity also in an open neighbourhood of $\mathcal{\ol D}$).
This is a consequence of~\cite[Lemma~2.3]{JS_2013}.
 Hence, local wellposedness is even valid under the more general condition $\Uin(\ol\Om)\subset\mathcal{\ol D}$.
Preservation of positivity and size constraint are, however, not fully trivial if  $\Uin(\ol\Om)\cap\partial\mathcal{D}\not=\emptyset$ and will have to be verified a posteriori. 
\end{rem}

\paragraph{\em Equivalence of the two PDE systems}
The $n$-species diffusion system~\eqref{eq:sys.red} precisely agrees with the last $n$ components of the original system~\eqref{eq:sys.all}. This may be verified explicitly by using formula~\eqref{eq:A.red} and the relation $\sum_{i=0}^nu_i=1$.
Since the equation for the zeroth component $u_0=1-\sum_{i=1}^nU_i$ in~\eqref{eq:sys.all} is obtained by summing up the equations for the components $i=1,\dots, n$, system~\eqref{eq:sys.red} is equivalent to the original problem. For later reference, we formulate this observation for the concept of weak solutions in the following lemma.
\begin{lemma}\label{f:12}
	System~\eqref{eq:weak} for $i\in\{0,\dots, n\}$ is  equivalent to the weak form of~\eqref{eq:sys.red}, i.e.\ to
	\begin{align}\label{eq:weak.red}
		\int_0^T\langle\partial_tU_i,\psi\rangle\,\dd t 
		+	\int_0^T\!\int_\Om \sum_{j=1}^n\hat A_{ij}(U)\nabla U_j\cdot\nabla\psi\,\dd x\dd t =0
	\end{align}
for $i\in\{1,\dots, n\}$.
	More precisely, $u=(u_0,\dots,u_n)$ being a weak solution of  \eqref{eq:sys.all} in the sense of Definition \ref{def:weak.sol} is equivalent to $U=(u_1,\dots, u_n)$ satisfying
	$u_i\in L^2(0,T;H^1(\Om))$, $\partial_tu_i\in L^2(0,T;(H^1(\Om))^*)$ for all $T<T^*$ and $i\in\{1,\dots,n\}$, fulfilling $U(t,x)\in \mathcal{\ol D}$ for a.e.\ $(t,x)\in(0,T^*)\times\Om$, and obeying~\eqref{eq:weak.red} for all $T<T^*$ and all $\psi\in L^2(0,T;H^1(\Om))$.
\end{lemma}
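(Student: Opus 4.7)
The plan is to reduce the equivalence to two purely algebraic facts: a pointwise matching of the flux coefficients between the two systems, and the observation that the equation for $u_0$ is automatically implied by the others thanks to the symmetry \ref{it:K.sp}.

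First, I would establish the pointwise flux identity: for $i\in\{1,\dots,n\}$ and $u_0 = 1-\sum_{j=1}^{n}U_j$,
\[\sum_{j=0}^{n}A_{ij}(u)\nabla u_j \;=\; \sum_{j=1}^{n}\hat A_{ij}(\uvec)\nabla U_j.\]
The derivation isolates the $j=0$ contribution on the left, uses $\nabla u_0 = -\sum_{j=1}^{n}\nabla U_j$ together with $A_{i0}(u)=-K_{i0}u_i$ (valid since $i\ne 0$), and then collects coefficients of $\nabla U_j$. For $j\ne i$ one obtains directly $-K_{ij}u_i+K_{i0}u_i=-(K_{ij}-K_{i0})u_i$, matching the off-diagonal entry in \eqref{eq:A.red}. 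For $j=i$ one uses $K_{ii}=0$ and the relation $K_{i0}u_0+\sum_{l=1}^{n}K_{il}u_l=\sum_{l=0}^{n}K_{il}u_l$ to recover the diagonal entry in \eqref{eq:A.red}. Hence, pointwise (in particular as an identity in $L^2(0,T;L^2(\Om))$), the weak equations \eqref{eq:weak} for $i\in\{1,\dots,n\}$ are the same as \eqref{eq:weak.red}.

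Next, I would verify that the equation indexed by $i=0$ adds no new information. The key algebraic observation is that \ref{it:K.sp} implies
\[\sum_{i=0}^{n}A_{ij}(u) = \sum_{k=0}^{n}K_{jk}u_k - \sum_{i=0}^{n}K_{ij}u_i = 0\quad\text{for every }j,\]
so the total flux $\sum_{i=0}^{n}\sum_{j=0}^{n}A_{ij}(u)\nabla u_j$ vanishes identically. Since by construction $\sum_{i=0}^{n}u_i\equiv 1$, so $\partial_t u_0 = -\sum_{i=1}^{n}\partial_t U_i$ as elements of $L^2(0,T;(H^1(\Om))^*)$, summing the weak equations for $i=1,\dots,n$ with any test function $\psi$ gives precisely the weak equation for $i=0$. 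Conversely, given \eqref{eq:weak} for all $i\in\{0,\dots,n\}$, restricting to $i=1,\dots,n$ and applying the flux identity yields \eqref{eq:weak.red}.

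Finally, I would handle regularity and the state-space constraint in a few lines: if $U_i\in L^2(0,T;H^1(\Om))$ with $\partial_tU_i\in L^2(0,T;(H^1(\Om))^*)$ for $i=1,\dots,n$, then $u_0:=1-\sum_{i=1}^{n}U_i$ lies in the same spaces, and conversely; and $U(t,x)\in\ol{\mathcal{D}}$ is exactly equivalent to $u(t,x)\in\hsurfCl$. There is no real analytical obstacle here; the only point requiring care is the bookkeeping of the $j=0$ contribution when collapsing the $(n{+}1)$-component flux onto $n$ components, which is where the specific form of $\hat A$ in \eqref{eq:A.red} (in particular the appearance of $K_{ij}-K_{i0}$) originates.
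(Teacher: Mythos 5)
Your proof is correct and follows essentially the same route as the paper's (informal) argument in the paragraph preceding the lemma: verifying via~\eqref{eq:A.red} and $\sum_i u_i=1$ that the flux in the reduced system matches the last $n$ components of~\eqref{eq:sys.all}, and observing that the $i=0$ equation is recovered by summing the others. You usefully make explicit the algebraic identity $\sum_{i=0}^n A_{ij}(u)=0$ (a direct consequence of~\ref{it:K.sp}), which the paper leaves implicit when stating that the $u_0$-equation is obtained by summation.
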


We should note that the dimension reduction leading to full parabolicity comes at a price: the entropy function in the new coordinates induces a strong coupling between the $n$ unknowns, potentially introducing new difficulties in the uniqueness and stability analysis.

\subsection{Algebraic estimates}\label{ssec:matrix}
Here, we establish coercivity bounds for the quadratic form induced by the symmetric and positive semi-definite matrix
\begin{align}\label{eq:140}
	\mathbb{P}(u) := D^2h(u)\mathbb{M}(u)D^2h(u), \quad u\in \hypersurf,
\end{align}
on the tangent space $T_u\hypersurf$ to $\hypersurf$.
Since the vector $\surfnormal:=(1,\dots,1)$ points in the direction perpendicular to the hyperplane containing $\hypersurf$, we can identify
\begin{align}
	T_u\hypersurf=\{\xi\in \mathbb{R}^{1+n}:\xi\cdot\surfnormal=0\}.
\end{align}

\begin{lemma}\label{l:EPalg}
	Let $K_{ij}$ satisfy Hypotheses~\ref{it:K.sp} \&~\ref{it:K.nn}.
	Further recall that the mobility matrix $\mathbb{M}$ is given by~\eqref{eq:mobility}, and let $\mathbb{P}$ be  given by~\eqref{eq:140} (with $h$ as in~\eqref{eq:def.H}).
	For all $u\in \hypersurf$ and all $\xi\in T_u\hypersurf$ 
	\begin{align}\label{eq:141}
		\xi^T	\mathbb{P}(u) \xi = \frac{1}{2}\sum_{i, j}K_{ij}u_iu_j\left|\frac{\xi_i}{u_i}-\frac{\xi_j}{u_j}\right|^2.
	\end{align}
\end{lemma}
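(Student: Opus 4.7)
The plan is essentially a direct algebraic computation; the tangent space restriction is in fact not needed (the identity holds on all of $\mathbb{R}^{1+n}$), so I would not try to exploit it.

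First, I would introduce the substitution $\eta := D^2h(u)\xi$. Since $h(u)=\sum_i\lambda(u_i)$ with $\lambda''(s)=1/s$, the Hessian $D^2h(u)=\diag(1/u_0,\dots,1/u_n)$ is diagonal and therefore $\eta_i=\xi_i/u_i$ for each $i$. By the definition of $\mathbb{P}$, this turns the left-hand side of~\eqref{eq:141} into
\begin{equation*}
\xi^T\mathbb{P}(u)\xi \;=\; \eta^T\mathbb{M}(u)\eta \;=\; \sum_{i,l=0}^n\mathbb{M}_{il}(u)\,\eta_i\eta_l.
\end{equation*}

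Next, I would plug in the explicit formula~\eqref{eq:mobility} for $\mathbb{M}_{il}(u)=u_i\bigl(\delta_{il}\sum_kK_{ik}u_k-K_{il}u_l\bigr)$ and split the resulting double sum into a ``diagonal'' and an ``off-diagonal'' contribution:
\begin{equation*}
\eta^T\mathbb{M}(u)\eta
= \sum_{i,k=0}^nK_{ik}u_iu_k\,\eta_i^2 - \sum_{i,l=0}^nK_{il}u_iu_l\,\eta_i\eta_l.
\end{equation*}
Relabelling $k\to j$ and $l\to j$, this reads $\sum_{i,j}K_{ij}u_iu_j(\eta_i^2-\eta_i\eta_j)$.

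Finally, I would symmetrise the first term using the hypothesis~\ref{it:K.sp}: since $K_{ij}u_iu_j=K_{ji}u_ju_i$,
\begin{equation*}
\sum_{i,j=0}^nK_{ij}u_iu_j\,\eta_i^2 \;=\; \tfrac12\sum_{i,j=0}^nK_{ij}u_iu_j\,(\eta_i^2+\eta_j^2),
\end{equation*}
so that the two contributions combine into $\tfrac12\sum_{i,j}K_{ij}u_iu_j(\eta_i-\eta_j)^2$. Substituting back $\eta_i=\xi_i/u_i$ yields exactly~\eqref{eq:141}.

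There is no real obstacle here: the only ingredients are the diagonal form of $D^2h$, the explicit expression~\eqref{eq:mobility} for $\mathbb{M}$, and the symmetry of $K_{ij}$. The non-negativity hypothesis~\ref{it:K.nn} is not needed for the identity itself but is of course what makes the right-hand side a bona fide sum of squares (and hence $\mathbb{P}\ge 0$). As noted above, the constraint $\xi\in T_u\hypersurf$ plays no role in the proof; it is presumably stated because this is the geometrically relevant regime in which~\eqref{eq:141} will subsequently be applied.
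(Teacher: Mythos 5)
Your proof is correct and follows essentially the same computation as the paper: compute $\mathbb{P}$ (or equivalently $\eta^T\mathbb{M}\eta$ with $\eta=D^2h(u)\xi$) using the diagonal form of $D^2h$ and the explicit formula for $\mathbb{M}$, then symmetrise via~\ref{it:K.sp} to produce the sum of squares. The paper even makes the same remark you do, immediately after the lemma, that the identity holds for general $\xi\in\mathbb{R}^{1+n}$ and the tangency restriction is only there because that is the relevant regime.
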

While identity~\eqref{eq:141} holds for general $\xi\in\mathbb{R}^{1+n}$, only tangent vectors~$\xi$ are relevant in our analysis.
 
\begin{proof}[Proof of Lemma~\ref{l:EPalg}]
	Observe that $D^2h(u)_{ij}=\frac{1}{u_i}\delta_{ij}$, and hence
	\begin{align}
		\mathbb{P}_{ij}(u) = \frac{1}{u_j}\delta_{ij}\sum_{k=0}^nK_{ik}u_k-K_{ij}.
	\end{align}
We then compute
\begin{align}
	\sum_{i,j}	\mathbb{P}_{ij}(u)\xi_i\xi_j &= 	\sum_{i,k}K_{ik}\frac{u_k}{u_i}\xi_i^2
	-	\sum_{i,j}K_{ij}\xi_i\xi_j 
	\\&=\frac{1}{2}\sum_{i,j}K_{ij}u_iu_j\big(\left|\tfrac{\xi_i}{u_i}\right|^2+\left|\tfrac{\xi_j}{u_j}\right|^2\big)-	\frac{1}{2}\sum_{i,j}K_{ij}u_iu_j2\tfrac{\xi_i}{u_i}\tfrac{\xi_j}{u_j},
\end{align}
where second step uses the symmetry of $K_{ij}$. The last line equals the right-hand side  of~\eqref{eq:141} and the assertion follows.
\end{proof}

\begin{lemma}[Matrix coercivity estimates]\label{l:M.coerc}
Assume the hypotheses and use the notations of Lemma~\ref{l:EPalg}.
Then for all $u\in \hypersurf$ and $\xi\in T_u\hypersurf$ 
\begin{align}\label{eq:pdGrad}
	\xi^T	\mathbb{P}(u) \xi \ge \sum_{\alpha=0}^n\kappa^{(\alpha)}
	\bigg(\frac{1}{u_\alpha}|\xi_\alpha|^2+u_\alpha\sum_{j=0}^n\frac{1}{u_j}|\xi_j|^2\bigg),
\end{align}
where $\kappa^{(\alpha)}:=\frac{1}{2}\min_{\ell:\ell\not=\alpha}K_{\alpha\ell}\ge0$ for $\alpha\in\{0,\dots,n\}$.

In particular, 
\begin{align}\label{eq:ffGrad}
		\xi^T	\mathbb{P}(u) \xi \ge\min_{\alpha, \ell:\alpha\not=\ell}K_{\alpha\ell}\cdot\sum_{j=0}^n\frac{1}{u_j}|\xi_j|^2.
\end{align}
\end{lemma}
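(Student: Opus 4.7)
The plan is to derive both bounds directly from the identity in Lemma~\ref{l:EPalg}, combined with the tangency condition $\sum_{j=0}^n\xi_j=0$, which is what allows the cross-terms to close up nicely.

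First, I would fix $\alpha\in\{0,\dots,n\}$ and isolate in the identity
\[
\xi^T\mathbb{P}(u)\xi = \tfrac{1}{2}\sum_{i,j=0}^nK_{ij}u_iu_j\Big|\tfrac{\xi_i}{u_i}-\tfrac{\xi_j}{u_j}\Big|^2
\]
the block of terms carrying the index $\alpha$. Since $K_{ii}=0$, the identity rewrites as $\tfrac{1}{2}\sum_{\alpha}\sum_{j\neq\alpha}K_{\alpha j}u_\alpha u_j|\tfrac{\xi_\alpha}{u_\alpha}-\tfrac{\xi_j}{u_j}|^2$. On the inner sum (for fixed $\alpha$) I would use $K_{\alpha j}\ge 2\kappa^{(\alpha)}$ for every $j\neq\alpha$, reducing the question to the purely algebraic claim
\[
\sum_{j\neq\alpha} u_\alpha u_j\,\Big|\tfrac{\xi_\alpha}{u_\alpha}-\tfrac{\xi_j}{u_j}\Big|^2 \;=\; \frac{1}{u_\alpha}|\xi_\alpha|^2 + u_\alpha\sum_{j=0}^n\frac{|\xi_j|^2}{u_j}.
\]
To verify this, I would expand the square, producing $\tfrac{u_j}{u_\alpha}\xi_\alpha^2-2\xi_\alpha\xi_j+\tfrac{u_\alpha}{u_j}\xi_j^2$ for each $j\neq\alpha$, and then sum. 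The key moment is the cross-term: $-2\xi_\alpha\sum_{j\neq\alpha}\xi_j$ equals $+2\xi_\alpha^2$ because $\xi\in T_u\hypersurf$ forces $\sum_{j\neq\alpha}\xi_j=-\xi_\alpha$. Combining this with the diagonal contributions and the identity $\sum_{j\neq\alpha}u_j=1-u_\alpha$ gives exactly the right-hand side above (note that the $j=\alpha$ term $u_\alpha\cdot\tfrac{|\xi_\alpha|^2}{u_\alpha}=|\xi_\alpha|^2$ in the target expression is precisely what the cross-term generates). Summing over $\alpha$ and dividing by two then yields~\eqref{eq:pdGrad}.

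For~\eqref{eq:ffGrad} I would simply apply the uniform bound $\kappa^{(\alpha)}\ge\tfrac{1}{2}\min_{\alpha,\ell:\alpha\neq\ell}K_{\alpha\ell}$ in~\eqref{eq:pdGrad}, rearrange the resulting sum using $\sum_\alpha u_\alpha=1$ (which gives $\sum_\alpha u_\alpha\sum_j\tfrac{|\xi_j|^2}{u_j}=\sum_j\tfrac{|\xi_j|^2}{u_j}$), and observe that the two contributions $\sum_\alpha\tfrac{|\xi_\alpha|^2}{u_\alpha}$ combine with the previous one to produce a factor of $2$ that cancels the $\tfrac{1}{2}$.

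I don't expect a serious obstacle here — the proof is essentially a careful expansion plus a single use of the tangency condition. The only point that deserves attention is making sure the tangency constraint $\sum_j\xi_j=0$ is applied exactly where it closes the cross-term, since without it the identity above fails and the estimate reverts to the weaker bound of the form $\sum K_{\alpha j}u_\alpha u_j|\cdots|^2$ without an explicit $1/u_j$-coercivity structure.
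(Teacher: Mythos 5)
Your proposal is correct and takes essentially the same route as the paper: expand the quadratic identity of Lemma~\ref{l:EPalg} for each fixed $\alpha$, bound $K_{\alpha j}\ge 2\kappa^{(\alpha)}$, and close the cross-term $-2\xi_\alpha\sum_{j\ne\alpha}\xi_j=+2\xi_\alpha^2$ via the tangency constraint together with $\sum_{j\ne\alpha}u_j=1-u_\alpha$, then deduce~\eqref{eq:ffGrad} by uniformly lower-bounding $\kappa^{(\alpha)}$ and using $\sum_\alpha u_\alpha=1$. Your bookkeeping of the $\tfrac12$ prefactor and the factor $2$ in $2\kappa^{(\alpha)}$ is consistent, and the verification of the intermediate algebraic identity for fixed $\alpha$ is exactly the paper's computation.
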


\begin{proof}
	Identity~\eqref{eq:141} allows us to estimate
	\begin{align}
			\xi^T	\mathbb{P}(u) \xi &= \frac{1}{2}\sum_{\alpha=0}^n\sum_{j:j\not=\alpha}K_{\alpha j}u_\alpha u_j\left|\frac{\xi_\alpha}{u_\alpha}-\frac{\xi_j}{u_j}\right|^2
			\\&\ge\sum_{\alpha=0}^n\kappa^{(\alpha)} u_\alpha
			\sum_{j:j\not=\alpha} u_j
			\bigg(\frac{1}{u_\alpha^2}\xi_\alpha^2+\frac{1}{u_j^2}\xi_j^2 - 2\frac{1}{u_\alpha u_j}\xi_\alpha\xi_j	\bigg)
\\&=\sum_{\alpha=0}^n\kappa^{(\alpha)} u_\alpha
\bigg(\frac{1-u_\alpha}{u_\alpha^2}\xi_\alpha^2+\sum_{j:j\not=\alpha} \frac{1}{u_j}\xi_j^2 + 2\frac{1}{u_\alpha}\xi_\alpha^2	\bigg)
\\&=\sum_{\alpha=0}^n\kappa^{(\alpha)}
\bigg(\frac{1}{u_\alpha}|\xi_\alpha|^2+u_\alpha\sum_{j=0}^n\frac{1}{u_j}|\xi_j|^2\bigg),
	\end{align}
	where the penultimate equality uses the fact that $\xi\cdot \surfnormal=0$, that is, $\sum_{j=0}^n\xi_j=0$. This shows~\eqref{eq:pdGrad}.
	
	 To infer inequality~\eqref{eq:ffGrad} it suffices to note that the right-hand side of~\eqref{eq:pdGrad} is bounded below by
	\begin{align}
	 \frac{1}{2}\min_{\alpha, \ell:\alpha\not=\ell}K_{\alpha\ell}\cdot\sum_{\beta=0}^n
		\bigg(\frac{1}{u_\beta}|\xi_\beta|^2+u_\beta\sum_{j=0}^n\frac{1}{u_j}|\xi_j|^2\bigg)
		= \frac{1}{2}\min_{\alpha, \ell:\alpha\not=\ell}K_{\alpha\ell}\cdot\bigg(2\sum_{j=0}^n\frac{1}{u_j}|\xi_j|^2\bigg).
	\end{align}
\end{proof}

\begin{corollary}\label{cor:hM.coerc} 
	Let $K_{ij}$ satisfy Hypotheses~\ref{it:K.sp}, \ref{it:K.nn} and define
	\begin{align}\label{eq:140hat}
		\mathbb{\hat P}(U) := D^2\hh(U)\MM(U) D^2\hh(U),\qquad U\in \mathcal{D}.
	\end{align}
Then, for all $U\in \mathcal{D}$ and $\zeta\in \mathbb{R}^n$,
\begin{align}
	\zeta^T	\mathbb{\hat P}(U)\zeta \ge 
	\kappa\bigg(|\sum_{l=1}^n\zeta_l|^2
	+\sum_{i=1}^n\frac{1}{U_i}|\zeta_i|^2\bigg),
\end{align}
where $\kappa := \min_{j,k\in\{0,\dots,n\},j\not=k}K_{jk}$.
\end{corollary}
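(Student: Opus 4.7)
The plan is to derive Corollary~\ref{cor:hM.coerc} by pulling the lower bound \eqref{eq:ffGrad} from $\hypersurf$ back to the reduced domain $\mathcal{D}$ via the parametrisation $\Psi$. The identity \eqref{eq:dq.matrix} already does most of the algebra for us: it writes $\mathbb{\hat P}(U) = D\Psi^T\, \mathbb{P}(\Psi(U))\, D\Psi$, where $\mathbb{P}$ is the quadratic form analysed in Lemma~\ref{l:EPalg}. Consequently, for any $\zeta\in\mathbb{R}^n$, setting $\xi := D\Psi\,\zeta \in\mathbb{R}^{n+1}$ gives $\zeta^T\mathbb{\hat P}(U)\zeta = \xi^T\mathbb{P}(\Psi(U))\,\xi$, and the whole task reduces to applying Lemma~\ref{l:M.coerc} to $\xi$.

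To this end, I would first read off from the explicit form of $\Psi$ that $\xi_0 = -\sum_{l=1}^n \zeta_l$ and $\xi_i = \zeta_i$ for $i\in\{1,\dots,n\}$. In particular $\xi\cdot\surfnormal = 0$, so $\xi$ lies in the tangent space $T_u\hypersurf$ at $u := \Psi(U)=(v,U_1,\dots,U_n)$ with $v := 1-\sum_{l=1}^n U_l$. This tangency is exactly the hypothesis required for Lemma~\ref{l:M.coerc}. Substituting $\xi$ and $u$ into \eqref{eq:ffGrad} then yields
\begin{align*}
\zeta^T\mathbb{\hat P}(U)\zeta \,\ge\, \kappa\sum_{j=0}^n \frac{|\xi_j|^2}{u_j}
\,=\, \kappa\bigg(\frac{1}{v}\,\Big|\sum_{l=1}^n \zeta_l\Big|^2 + \sum_{i=1}^n \frac{|\zeta_i|^2}{U_i}\bigg).
\end{align*}

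To conclude, I would observe that $U\in\mathcal{D}$ forces $0<v<1$ and hence $v^{-1}\ge 1$; replacing $v^{-1}$ by $1$ in front of the first term delivers the claimed inequality (and in fact a slightly stronger one). There is no real obstacle in this argument: the substantive estimation all lives in Lemmas~\ref{l:EPalg}--\ref{l:M.coerc}, and the only mild observation needed is that $D\Psi$ sends $\mathbb{R}^n$ into $T_{\Psi(U)}\hypersurf$, which is immediate from $\Psi(\mathcal{D})\subset\hypersurf$. If one instead wanted to read off a sharper bound reflecting the behaviour near $\partial\mathcal{D}$ (where $v\to 0$), one could simply retain the factor $v^{-1}$ in the estimate above.
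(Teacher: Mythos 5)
Your proof is correct and follows exactly the route the paper sketches: pull $\zeta$ back to $\xi = D\Psi\zeta \in T_{\Psi(U)}\hypersurf$ via identity~\eqref{eq:dq.matrix}, apply~\eqref{eq:ffGrad} from Lemma~\ref{l:M.coerc}, and discard the favourable factor $v^{-1}\ge 1$. The paper leaves the ``elementary manipulations'' implicit, and you have simply written them out.
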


\begin{proof}
This follows from Lemma~\ref{l:M.coerc} by inserting the tangent vector  $\xi:=D\Psi\zeta$ and performing some elementary manipulations. Note that here we have also used the  identity~\eqref{eq:dq.matrix}.	
\end{proof}

\subsection{Existence of weak solutions under 
	hypothesis~\texorpdfstring{\ref{it:K.pos}}{}}

Observe that under hypothesis~\ref{it:K.pos}, i.e.\ for $\kappa:=\min_{i\not=j}K_{ij}>0$,
Corollary~\ref{cor:hM.coerc} implies that $
	\zeta^T	\mathbb{\hat P}(U)\zeta \gtrsim \sum_{i=1}^n\frac{1}{U_i}|\zeta_i|^2$.
This estimate, combined with the convexity properties of $\hh$, allows us to apply 
Theorem~2 of~\cite{Juengel_2015} to infer 
 global existence of weak solutions to system~\eqref{eq:sys.all} under the condition~\ref{it:K.pos}, where we also rely on Lemma~\ref{f:12} on the equivalence between the original and the transformed system:
 
\begin{theorem}[See~\cite{Juengel_2015} \& L.~\ref{f:12}]\label{thm:ex.full}
	Let $\min_{i\not=j}K_{ij}>0$.	Let $\uin\in L^1(\Om)^\sn$ satisfy $\uin(x)\in\hsurfCl$ for a.e.\ $x\in\Om$.
	Then there exists a function $u\in L^2_\loc([0,\infty);H^1(\Om))^\sn$ with 
	$\partial_tu\in L^2_\loc([0,\infty);H^1(\Om)^*)^\sn$ and $u(t,x)\in\hsurfCl$ for a.e.\ $(t,x)\in(0,\infty)\times\Om$ that satisfies system~\eqref{eq:sys.all} in the weak sense and takes the initial datum $u(0)=\uin$. 	
	Furthermore, the regularity
	 \begin{align}\label{eq:sqrtL2H1}
		\sqrt{u_i}\in L^2_\loc([0,\infty);H^1(\Om))\qquad \textit{ for all }i\in\{0,\dots,n\}
	\end{align}
holds true.
\end{theorem}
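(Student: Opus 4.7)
The plan is to apply J\"ungel's boundedness-by-entropy method \cite[Theorem~2]{Juengel_2015} to the reduced $n$-species formulation~\eqref{eq:sys.red} derived in Section~\ref{sec:entropy}, and then transfer the resulting weak solution back to~\eqref{eq:sys.all} by means of Lemma~\ref{f:12}. The reduction to $n$ species has already eliminated the mobility degeneracy in the direction $\surfnormal$, so this is the natural coordinate system in which to invoke an off-the-shelf existence result.

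First I would check the structural hypotheses of the cited theorem in the $U$-variables: the domain $\mathcal{D}$ is open, bounded, and convex; the entropy density $\hh$ is smooth and strictly convex on $\mathcal{D}$ and extends continuously to $\ol{\mathcal{D}}$, so that $\mathcal{H}(\uin)$ is finite whenever $\uin(x)\in\hsurfCl$; and the explicit form $D_i\hh(U)=\log(U_i/(1-\sum_j U_j))$ shows that $D\hh\colon\mathcal{D}\to\mathbb{R}^n$ is a smooth diffeomorphism onto $\mathbb{R}^n$. The crucial non-degeneracy input is Corollary~\ref{cor:hM.coerc}, which under~\ref{it:K.pos} gives
\begin{equation*}
	\zeta^T D^2\hh(U)\hA(U)\zeta \;\geq\; \kappa\sum_{i=1}^n \tfrac{1}{U_i}|\zeta_i|^2 \qquad\text{for all }U\in\mathcal{D},\ \zeta\in\mathbb{R}^n,
\end{equation*}
with $\kappa:=\min_{i\neq j}K_{ij}>0$. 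This is precisely the coercivity required to extract uniform $L^2$ bounds on $\nabla\sqrt{U_i}$ (for $i=1,\dots,n$) along the time-discrete/Galerkin approximations underlying the proof in \cite{Juengel_2015}, and to pass to the limit there.

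The cited theorem then yields a weak solution $U$ of~\eqref{eq:sys.red} in the regularity class listed in Lemma~\ref{f:12}, with values in $\ol{\mathcal{D}}$ and initial datum $(\uin_1,\dots,\uin_n)$. Setting $u_0:=1-\sum_{i=1}^n U_i$ and $u_i:=U_i$ for $i\geq 1$, Lemma~\ref{f:12} converts this into a weak solution $u=\Psi(U)$ of~\eqref{eq:sys.all} with values in $\hsurfCl$ and initial datum $\uin$. The regularity~\eqref{eq:sqrtL2H1} for the components $i=1,\dots,n$ is inherited directly from the above coercivity; for the eliminated component $i=0$, I would invoke the stronger estimate~\eqref{eq:ffGrad} of Lemma~\ref{l:M.coerc} together with the identity $\scrp(u)=\PP(U)$ from~\eqref{eq:146}, which yields $L^2_\loc$ control of $\sum_{j=0}^n|\nabla\sqrt{u_j}|^2$ and hence of $\nabla\sqrt{u_0}$ in particular. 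Since the genuinely hard analytical work -- the mobility degeneracy along $\surfnormal$, the construction of approximate solutions, and the compactness in the limit passage -- is entirely absorbed into the reduction~\eqref{eq:sys.red} and the invocation of \cite{Juengel_2015}, the only real obstacle is organisational: ensuring that the weak formulation in the original coordinates truly agrees with the one in the transformed coordinates, which is precisely the content of Lemma~\ref{f:12}.
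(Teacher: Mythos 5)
Your proposal takes essentially the same route as the paper: reduce to the $n$-species system~\eqref{eq:sys.red}, verify the coercivity hypothesis of~\cite[Theorem~2]{Juengel_2015} via Corollary~\ref{cor:hM.coerc}, invoke that theorem, and transfer back to~\eqref{eq:sys.all} through Lemma~\ref{f:12}; your extra derivation of $\sqrt{u_0}\in L^2_\loc H^1$ from~\eqref{eq:ffGrad} and~\eqref{eq:146} is a slightly more self-contained version of the paper's pointer to Step~3 and equation~(23) of J\"ungel's proof. The one point you leave unaddressed, and which the paper explicitly flags, is that J\"ungel's Theorem~2 is stated for initial data taking values a.e.\ in the \emph{open} set $\hypersurf$, whereas the present theorem allows $\uin(x)\in\hsurfCl$; the relaxation to data touching the boundary requires the approximation argument noted in the paragraph following~\cite[Theorem~2]{Juengel_2015}, and a complete write-up should record this.
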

The main feature of the proof of Theorem~\ref{thm:ex.full} consists in a transformation to the Legendre conjugate variables $w=D\hh(U)$ upon which the system can be regularised by a standard higher-order elliptic term while preserving the key entropy estimate.

We should note that the statement of Theorem~2 by J\"ungel~\cite{Juengel_2015} 
assumes that $\uin(x)\in\hypersurf$ for a.e.\ $x\in\Om$. However, as observed by the author in the paragraph following the statement of this theorem, an approximation argument allows to relax this hypothesis to $\uin(x)\in\hsurfCl$ for a.e.\ $x\in\Om$. 
It is further necessary to point out that the regularity~\eqref{eq:sqrtL2H1} does not appear in the statement of~\cite[Theorem~2]{Juengel_2015}, but it directly follows from the proof of this result, see~\cite[proof of Theorem~2;  in particular, equation (23) and Step~3]{Juengel_2015}.

 Let us remark that in~\cite[Theorem~2.2]{BE_2018} a result similar to Theorem~\ref{thm:ex.full} appears (formulated for the $n$-species system), again deduced by verifying the hypotheses of Theorem~2 of~\cite{Juengel_2015}.
 But since it is stated in a slightly weaker form, we will not use it here.

\section{Stability results for full interactions}\label{sec:stab}

\subsection{Entropy dissipation balance}

Theorems~\ref{thm:uniq} and~\ref{thm:stab.ss} both rely on the following entropy dissipation identity.

\begin{proposition}\label{prop:edi}
	Let $u=(u_0,\dots,u_n)$ be a weak solution according to Definition~\ref{def:weak.sol} enjoying the regularity $\sqrt{u_i}\in L^2_\loc([0,T^*);H^1(\Om))$ for all $i$.
	Then for all $T\in(0,T^*)$
	\begin{align}\label{eq:edi}
		\int_\Om h(u(T))\,\dd x =\int_\Om h(u(0))\,\dd x -\int_0^T\!\int_\Om \scrp(u)\,\dd x\dd t,
	\end{align}
	where $\scrp(u)=\sum_{i,j}\nabla u_i\cdot\mathbb{P}_{ij}(u)\nabla u_j=\sum_{i,j}\nabla u_i\cdot(D^2h(u)\mathbb{M}(u)D^2h(u))_{ij}\nabla u_j$ (cf.~\eqref{eq:146}).
	
	As a consequence, for all $0\le s<T<T^*$
	\begin{align}\label{eq:edb}
		\int_\Om h(u(T))\,\dd x =\int_\Om h(u(s))\,\dd x -\int_s^T\!\int_\Om \scrp(u)\,\dd x\dd t.
	\end{align}
\end{proposition}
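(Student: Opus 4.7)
The plan is to derive~\eqref{eq:edi} by a chain-rule argument, testing the weak formulation~\eqref{eq:weak} with a regularisation of the formal chemical potential $D_ih(u)=\log u_i$ to avoid the singularity at $u_i=0$. Concretely, I would test with $\psi_{i,\smp}:=\log(u_i+\smp)$, $\smp\in(0,1)$. Since $u_i(t,x)\in[0,1]$ and $\nabla\psi_{i,\smp}=\nabla u_i/(u_i+\smp)\in L^2((0,T)\times\Om)^d$, the function $\psi_{i,\smp}$ belongs to $L^2(0,T;H^1(\Om))$ and is admissible.

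Let $\Lambda_\smp(s):=\int_0^s\log(r+\smp)\,\dd r$, which is $C^1$ with Lipschitz derivative on $[0,1]$ and satisfies $\Lambda_\smp\to\lambda-1$ uniformly on $[0,1]$ as $\smp\downarrow0$. The standard chain rule for functions $u_i\in L^2(0,T;H^1(\Om))$ with $\partial_tu_i\in L^2(0,T;H^1(\Om)^*)$ (which in particular yields $u_i\in C([0,T];L^2(\Om))$) gives
\begin{align}
\int_0^T\langle\partial_tu_i,\psi_{i,\smp}\rangle\,\dd t=\int_\Om\big[\Lambda_\smp(u_i(T))-\Lambda_\smp(u_i(0))\big]\,\dd x.
\end{align}
Summing~\eqref{eq:weak} over $i\in\{0,\dots,n\}$ with these test functions, and using that $D^2h(u)$ is diagonal so $(D^2h(u)A(u))_{ij}=A_{ij}(u)/u_i$, produces the regularised entropy-dissipation balance
\begin{align}
\sum_{i=0}^n\int_\Om\big[\Lambda_\smp(u_i(T))-\Lambda_\smp(u_i(0))\big]\,\dd x+\sum_{i,j=0}^n\int_0^T\!\int_\Om\frac{A_{ij}(u)}{u_i+\smp}\nabla u_i\cdot\nabla u_j\,\dd x\,\dd t=0.
\end{align}

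The final step is to let $\smp\downarrow 0$. The boundary term converges to $\int_\Om(h(u(T))-h(u(0)))\,\dd x$ by the uniform convergence $\Lambda_\smp\to\lambda-1$ (additive constants cancel). In the dissipation integral, the diagonal piece $\frac{A_{ii}(u)}{u_i+\smp}|\nabla u_i|^2=\sum_kK_{ik}\frac{u_k}{u_i+\smp}|\nabla u_i|^2\ge 0$ increases pointwise to $4\sum_kK_{ik}u_k|\nabla\sqrt{u_i}|^2$, which lies in $L^1$ thanks to the hypothesis $\sqrt{u_i}\in L^2_\loc([0,T^*);H^1(\Om))$, so monotone convergence applies. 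Off-diagonal integrands $-K_{ij}\frac{u_i}{u_i+\smp}\nabla u_i\cdot\nabla u_j$ are dominated by $K_{ij}|\nabla u_i||\nabla u_j|\in L^1$ independently of $\smp$; although $u_i/(u_i+\smp)$ converges only to $\chi_{\{u_i>0\}}$, the identity $\nabla u_i=2\sqrt{u_i}\nabla\sqrt{u_i}$ forces $\nabla u_i=0$ a.e.\ on $\{u_i=0\}$, and dominated convergence yields the limit $-K_{ij}\nabla u_i\cdot\nabla u_j$. Reassembling reproduces $\scrp(u)$ via Lemma~\ref{l:EPalg}, proving~\eqref{eq:edi}; identity~\eqref{eq:edb} follows either by applying the same argument on $(s,T)$ or by subtracting the two balances. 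The principal technical point is the apparent singularity of the factor $1/u_i$ in the diagonal part of $\mathbb{P}(u)$; the hypothesis $\sqrt{u_i}\in L^2_\loc H^1$ is used in a dual role — first to reinterpret $|\nabla u_i|^2/u_i=4|\nabla\sqrt{u_i}|^2$ as an integrable quantity, and second to ensure $\nabla u_i=0$ on the null set $\{u_i=0\}$ — and both facets are essential to identify the correct limit.
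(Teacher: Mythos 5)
Your proof is correct and follows essentially the same route as the paper: both regularise by testing the weak form with $\psi_{i,\delta}=\log(u_i+\delta)$ (equivalently, with $D_i h$ evaluated at $u+\delta\mathbf 1$), use the Sobolev--Bochner chain rule for the boundary term, and then pass to the limit $\delta\downarrow 0$. The only difference is a cosmetic one in the limit step — the paper applies dominated convergence uniformly with the single majorant $C\sum_i|\nabla\sqrt{u_i}|^2$, while you split off the non-negative diagonal part for a monotone-convergence argument and dominate the off-diagonal part by $K_{ij}|\nabla u_i||\nabla u_j|$ — and your appeal to Lemma~\ref{l:EPalg} at the end is not actually needed, since $\mathbb{P}(u)=D^2h(u)A(u)$ shows directly that the limiting integrand equals $\scrp(u)$.
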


\begin{proof}[Proof of Prop.~\ref{prop:edi}]
	We begin by observing that  $u\in C([0,T];L^2(\Om))^{1+n}$. This is a consequence of the regularity hypotheses in Definition \ref{def:weak.sol} combined with standard embeddings for Bochner spaces. Formally, identity~\eqref{eq:edi} follows upon testing~\eqref{eq:weak} with $D_ih(u)$, taking the sum over $i$ and applying a chain rule.
	However, the Boltzmann-type entropy density $h$ is not smooth up to $u_i=0$, and hence, we need to regularise first. The procedure is very standard, but since it has not been carried out for this particular system, some details are required.
	
	For $\ve>0$ let $h_\ve(u):=h(u_0+\ve,\dots,u_n+\ve)=\sum_{i=0}^n\lambda(u_i+\ve).$
	Then
	\begin{align}
		D_ih_\ve(u)=\log(u_i+\ve)\quad\in \,L^2_\loc([0,T^*);H^1(\Om))
	\end{align}
	and $D_{ik}h_\ve(u) = \tfrac{1}{u_i+\ve}\delta_{ik}$ for all $i,k\in\{0,\dots,n\}$,
	where $\delta_{ik}$ denotes the Kronecker symbol.
	
	Choosing $\psi=D_ih_\ve(u)$ in the weak equation~\eqref{eq:weak} for $u_i$ and taking the sum $\sum_{i=0}^n$ gives
	\begin{align}\label{eq:114}
		\int_\Om h_\ve(u(T))\,\dd x - 	\int_\Om h_\ve(u(0))\,\dd x 
		=	-\int_0^T\!\int_\Om \sum_{i,j,k}D_{ik}h_\ve(u)A_{ij}(u)\nabla u_j\cdot \nabla u_k\,\dd x\dd t. \qquad
	\end{align}
	To pass to the limit $\ve\to0$, we wish to apply the dominated convergence theorem. 
	In the terms on the left-hand side this is immediate thanks to the bound  $0\le u_i\le 1$.
	Concerning the integral on the right-hand side, we need appropriate information about its integrand. 
	For this purpose, we compute for $i,j\in\{0,\dots,n\}$, using the formula~\eqref{eq:mobility} for $\mathbb{M}$,
	\begin{align}
		 \sum_{k}D_{ik}h_\ve(u)A_{ij}(u)\nabla u_j\cdot \nabla u_k &= 
		 \frac{1}{u_i+\ve}\mathbb{M}_{ij}(u)\frac{1}{u_j}\nabla u_j\cdot\nabla u_i
		 \\&=\frac{1}{u_i+\ve} u_i(\delta_{ij}\sum_{l=0}^nK_{il}u_l - K_{ij}u_j)\frac{1}{u_j}\nabla u_j\cdot\nabla u_i
		 \\&=\delta_{ij}\frac{1}{u_i+\ve}|\nabla u_i|^2\sum_{l=0}^nK_{il}u_l
		 -\frac{u_i}{u_i+\ve} K_{ij}\nabla u_j\cdot\nabla u_i.
	\end{align}
The terms in the last line are bounded above (in absolute values, pointwise a.e.) by the integrable function $C\sum_i|\nabla \sqrt{u_i}|^2$ for some $\ve$-independent constant $C\in(0,\infty)$.
	Hence, the dominated convergence theorem allows to infer identity~\eqref{eq:edi} upon taking the limit $\ve\downarrow0$ in equation \eqref{eq:114}.
\end{proof}

\subsection{Weak-strong stability estimate}\label{ssec:stab.est}

In the derivation of the stability estimate asserted in Theorem~\ref{thm:uniq}, we roughly follow the technique in~\cite{Hopf_prepr2021}. The size-exclusion effect, however, induces some new difficulties requiring the new estimate~\eqref{eq:108} to handle the present problem.

\begin{proof}[Proof of Theorem~\ref{thm:uniq}]
Let $\tilde u=(\tilde u_0,\dots,\tilde u_n)$ be a strong solution and $u=(u_0,\dots,u_n)$ a weak solution with the properties as stated in Theorem~\ref{thm:uniq}. 
We recall the relation $U=(u_1,\dots, u_n)$, $\tilde U=(\tilde u_1,\dots,\tilde u_n)$ and 
note that for $i\in\{1,\dots,n\}$
\begin{align}\label{eq:112}
	D_i \hh(U)=\log\bigg(\frac{U_i}{u_0}\bigg)=\log\big(\tfrac{u_i}{u_0}\big).
\end{align}
In the proof below, we freely switch between the original and the transformed variables $u,\tilde u$ and  $U,\tilde U$  and choose the form that appears to be more convenient for the specific argument in question. As explained in Section~\ref{sec:entropy}, both forms are equivalent and the specific choice of variables has no mathematical significance.

We then define 
\begin{align}
	\hh_\rel(U,\tilde U) &:= \hh(U)-\sum_{i=1}^nD_i \hh(\tilde U)(U_i-\tilde U_i)-\hh(\tilde U)
	\\&=h(u)-\sum_{k=0}^nD_k h(\tilde u)(u_k-\tilde u_k)-h(\tilde u)=h_\rel(u,\tilde u).
\end{align}
Since $\hh$ is strictly convex uniformly in $\mathcal{D}$ and $u_0=1-\sum_{i=1}^nu_i$, we have  the lower bound
\begin{align}
	\hh_\rel(U,\tilde U) \gtrsim |U-\tilde U|^2\sim |u-\tilde u|^2.
\end{align}

Let now $T\in (0,T^*)$.
	We will establish a weak-strong stability estimate of the form
	\begin{align}\label{eq:115}
		\eval{\int_\Om \hh_\rel(U,\tilde U)\,\dd x}_{\tau=0}^{\tau=t} &\le C_{T,\tilde u} 
		 \int_0^t\!\!\int_\Om \;\hh_\rel(U,\tilde U)\;\dd x\dd \tau,\quad t\in(0,T),
	\end{align}
with $C_{T,\tilde u}=C(\|\tilde u\|_{C^{0,1}([0,T]\times\ol\Om)},(\inf_{[0,T]\times\ol\Om} \tilde u_i)_{i=0}^n)$,
 upon which the asserted inequality~\eqref{eq:stab.f} follows from Gronwall's lemma.

To begin with, we compute the time evolution of $\hh_\rel(U,\tilde U)$ using the entropy dissipation balance~\eqref{eq:edb} and the regularity of $U,\tilde U$ as well as integration by parts for Sobolev--Bochner functions (cf.\ \cite[Lemma~7.3]{Roubicek_2013_npde}),
\begin{align}
	\eval{\int_\Om \hh_\rel(U,\tilde U)\,\dd x}_{\tau=0}^{\tau=t} 
	&=-\int_0^t\!\!\int_\Om \PP(U) \,\dd x\dd \tau
		-\int_0^t \frac{\dd}{\dd\tau} \!\int_\Om \sum_{i=1}^nD_i \hh(\tilde U)(U_i-\tilde U_i)+\hh(\tilde U)\, \dd x\,\dd \tau
		\\&=-\int_0^t\!\!\int_\Om \PP(U) \,\dd x\dd \tau
		-\int_0^t\!\!\int_\Om\big(\sum_{i,k=1}^n D_{ik} \hh(\tilde U)\partial_t\tilde U_k(U_i-\tilde U_i)\big) \,\dd x\dd \tau
		\\&\hspace{0.25\linewidth} -\int_0^t\sum_{i=1}^n \langle\partial_tU_i,D_i \hh(\tilde U)\rangle\,\dd \tau.
\end{align}
For the penultimate integral on the right-hand side we next use the fact that $\tilde u$ is a strong solution, while the last integral on the right-hand side is further rewritten appealing to the weak solution property~\eqref{eq:weak.red} of $U$. Upon insertion of the identity $\hA= \MM D^2\hh$, some rearrangement of terms and renaming of indices, we arrive at
\begin{equation}\label{eq:110}
	\begin{split}
		\quad\eval{\int_\Om \hh_\rel(U,\tilde U)\,\dd x}_{\tau=0}^{\tau=t} &=
		\int_0^t\!\!\int_\Om \;\rho\;\dd x\dd \tau,\quad t\in(0,T),
	\end{split}
\end{equation}
where\footnote{A caveat is in place here: Since the function $D_l\hh(U)$ may not be locally integrable (not to mention weakly differentiable in the Sobolev sense), the gradient $\nabla D_l\hh(U)$ in~\eqref{eq:109} needs to be read in a symbolic way with the rigorous meaning $\nabla D_l\hh(U):=\sum_{j=1}^nD_jD_l\hh(U)\nabla U_j$.} 
\begin{equation}\label{eq:109}
	\begin{split}
		\rho :=-\PP(U)
		&+ \sum_{i,l=1}^n\nabla D_i \hh(\tilde U)\cdot \MM_{il}(U)\nabla D_l \hh(U)
		\\& + \sum_{i,l,j=1}^n\nabla\big(D_{ij}\hh(\tilde U)(U_j-\tilde U_j)\big)\cdot 
		\MM_{il}(\tilde U)\nabla D_l \hh(\tilde U)
	\end{split}
\end{equation}
with the understanding that $\nabla D_l\hh(U):=\sum_{k=1}^nD_{lk}\hh(U)\nabla U_k$.

Hence, for proving the stability estimate~\eqref{eq:115}
it suffices to show the pointwise bound 
\begin{align}\label{eq:111}
		\rho\le C_{T,\tilde u}\, \hh_\rel(U,\tilde U).
\end{align}
For this purpose, we fix $\iota:=\iota_T\in(0,1)$ such that $\tilde u_i\ge 2\iota$ in $[0,T]\times\Om$ for all $i\in\{0,\dots,n\}$. 
For a.e.\ $(t,x)\in [0,T]\times\Om$ we then distinguish the following cases:
\begin{itemize}
	\item Case 1: $\min\{u_0(t,x),\dots,u_n(t,x)\}> \iota$.
	\item Case 2:  $\min\{u_0(t,x),\dots,u_n(t,x)\}\le \iota$.
\end{itemize}

\underline{Case 1.} In this case, $u_i\ge\iota$ for all $i\in\{0,\dots,n\}$ and there exists a convex set $\mathcal{K}_\iota\subset\subset\mathcal{D}$ 
with $\dist(\mathcal{K}_\iota,\partial\mathcal{D})\gtrsim\iota$  such that $U(t,x)\in \mathcal{K}_\iota$.
We will show that $\rho\le C_{T,\tilde u}|u-\tilde u|^2$ using the nondegeneracy property~\eqref{eq:M.nd}, the strict convexity of~$\hh$, as well as the smoothness of $\MM$ and $\hh$ in $\mathcal{D}$. 
Dependencies of constants on fixed quantities such as $\|\tilde u\|_{C^{0,1}([0,T]\times\ol\Om)}$ and $\iota$ will frequently be omitted.

Recalling the identity $\PP(U)=\sum_{i,l=1}^n\nabla D_i \hh(U)\cdot\MM_{il}(U)\nabla D_l \hh(U)$ and taking advantage of the fact that, since $U(t,x)$ is away from $\partial\mathcal{D}$,  the Hessian $D^2\hh(U(t,x))\in \mathbb{R}^{n\times n}$ is well-defined, we may rewrite~\eqref{eq:109} in the form 
\begin{align}\label{eq:103}
	\begin{aligned}
		\rho & =-\sum_{i,l=1}^n\nabla (D_i \hh(U)-D_i \hh(\tilde U))\cdot\MM_{il}(U)\nabla (D_l \hh(U)-D_l \hh(\tilde U))
		\\&\qquad -\sum_{i,l=1}^n\nabla (D_i \hh(U)-D_i \hh(\tilde U))\cdot(\MM_{il}(U)-\MM_{il}(\tilde U))\nabla D_l \hh(\tilde U)
		\\&\qquad -\sum_{i,j,l=1}^n \nabla\big(D_i \hh(U)-D_i \hh(\tilde U)-D_{ij}\hh(\tilde U)(U_j-\tilde U_j)\big)\cdot 
		\MM_{il}(\tilde U)\nabla D_l \hh(\tilde U),
	\end{aligned}
\end{align}
where we continue to employ the short-hand  notation $\nabla D_i \hh(U):=\sum_{k=1}^nD_{ik} \hh(U)\nabla U_k$.

Thanks to inequality~\eqref{eq:M.nd} with $\mathcal{K}=\mathcal{K}_\iota$ and the fact that $h\in C^4(\mathcal{K}_\iota)$, there exists $\epsilon_*=\epsilon_*(\mathcal{K}_\iota)>0$ such that for any $\delta\in(0,1)$
\begin{align}
	\rho&\le -\epsilon_*\sum_{i=1}^n|\nabla D_i \hh(U)-	\nabla D_i \hh(\tilde U)|^2
+\delta|\nabla u-\nabla \tilde u|^2
	+C_\delta|u-\tilde u|^2.
\end{align}
Here, the last term in~\eqref{eq:103} has been estimated invoking Taylor's theorem, see Appendix~\ref{app:aux.est} for details.

We assert that there exists $\epsilon(\iota_T)>0$ and $C_{T,\tilde u}=C(\|\tilde u\|_{C^{0,1}([0,T]\times\ol\Om)},\iota_T)<\infty$ 
such that
\begin{equation}\label{eq:108}
G:=	\sum_{i=1}^n|\nabla D_i \hh(U)-	\nabla D_i \hh(\tilde U)|^2
	\ge \epsilon(\iota_T)|\nabla u-\nabla \tilde u|^2 - C_{T,\tilde u}|u-\tilde u|^2.
\end{equation}
First suppose that~\eqref{eq:108} holds true. Then, choosing $\delta=\epsilon_*\epsilon(\iota_T)$  allows us to deduce~\eqref{eq:111} in Case~1.

\smallskip

\textit{Proof of inequality}~\eqref{eq:108} (in Case~1). 
We compute for $i\in\{1,\dots,n\}$
\begin{align}
	\nabla D_i\hh(U)-	\nabla D_i \hh(\tilde U) &= 
	\sum_{j=1}^n\big(D_{ij}\hh(U)\nabla U_j-D_{ij}\hh(\tilde U)\nabla \tilde U_j\big)
	\\&=	\sum_{j=1}^n \big[D_{ij}\hh(U)(\nabla U_j-\nabla \tilde U_j)
	+(D_{ij}\hh(U)-D_{ij}\hh(\tilde U))\nabla \tilde U_j\big].
\end{align}
Since $\hh$ is smooth and strictly convex on $\mathcal{D}$, the inverse Hessian $B(U):=(D^2\hh(U))^{-1}$
is well-defined and for $U\in \mathcal{K}_\iota$ its spectral norm $|||B(U)|||$ is bounded above by a finite constant $C(\iota)$ only depending on $\iota>0$. Hence, 
\begin{align}
	|\nabla U-\nabla \tilde U|&\le |B(U)(\nabla D\hh(U)-	\nabla D \hh(\tilde U) )|+
	|B(U)(D^2\hh(U)-D^2\hh(\tilde U))\nabla \tilde U|
	\\&\le C(\iota)|\nabla D\hh(U)-	\nabla D \hh(\tilde U)|+C_{T,\tilde u}|U-\tilde U|,
\end{align}
which implies~\eqref{eq:108}.

This concludes the proof of inequality~\eqref{eq:108}.  

\medskip

\underline{Case 2.} 
In this case, we recall the definition of $\rho$ in~\eqref{eq:109} and Corollary~\ref{cor:hM.coerc} (see also inequality~\eqref{eq:102'}) to estimate
\begin{align}
	\rho &\le -\epsilon_1\sum_{i=1}^n|\nabla\sqrt{u_i}|^2+C_{T,\tilde u}|\nabla u|+C_{T,\tilde u}
	\\&\le -\epsilon_2\sum_{i=1}^n|\nabla\sqrt{u_i}|^2+C_{T,\tilde u},
\end{align}
where we have used the boundedness of $u$.
Since $|u-\tilde u|\gtrsim_\iota1$, we deduce~\eqref{eq:111}.

\smallskip
The proof of Theorem~\ref{thm:uniq} is now complete.
\end{proof}

\begin{remark}[Second proof of inequality~\eqref{eq:108}]
	For a somewhat more quantitative estimate in Case~1,  the following alternative proof of inequality~\eqref{eq:108} may be of interest.
 Let $T\in(0,\infty)$ be fixed but arbitrary and abbreviate $\iota:=\iota_T$.
	In view of~\eqref{eq:112}, we have
	\begin{align}
		G=\sum_{i=1}^n |\nabla\log(\tfrac{u_i}{u_0})-\nabla\log(\tfrac{\tilde u_i}{\tilde u_0})|^2,
	\end{align}
	which can be rewritten as
	\begin{align}
		G=\sum_{i=1}^n|\nabla\vp_i-\nabla\vp_0|^2
	\end{align}
	for  $\vp_i:=\log\big(\tfrac{u_i}{\tilde u_i}\big)$. We compute 
	using the identity $u_0=1-\sum_{i=1}^nu_i$
	\begin{align}
		\nabla\varphi_0 = \nabla \log(u_0)-\nabla\log(\tilde u_0)
		&= -\sum_{i=1}^n\tfrac{1}{u_0}\nabla u_i
		+\sum_{i=1}^n\tfrac{1}{\tilde u_0}\nabla \tilde u_i
		\\&= -\sum_{i=1}^n\tfrac{u_i}{u_0}\nabla \log(u_i)
		+\sum_{i=1}^n\tfrac{\tilde u_i}{\tilde u_0}\nabla \log(\tilde u_i)
		\\&=-\sum_{i=1}^n\tfrac{u_i}{u_0}\nabla\varphi_i+R(u,\tilde u),
	\end{align}
	where here and below $R(u,\tilde u)$ denotes a harmless term that satisfies
	$|R(u,\tilde u)|\le C_{T,\tilde u}|u-\tilde u|$ and  may change from line to line.
	Hence,
	\begin{align}
		\nabla\vp_1-\nabla\vp_0 &= (1+\tfrac{u_1}{u_0})\nabla\vp_1 +\sum_{i=2}^n\tfrac{u_i}{u_0}\nabla\vp_i +R(u,\tilde u)
		\\&= (1+\sum_{i=1}^n\tfrac{u_i}{u_0})\nabla\vp_1 +\sum_{i=2}^n\tfrac{u_i}{u_0}(\nabla\vp_i-\nabla\vp_1) +R(u,\tilde u)
		\\&=(1+\sum_{i=1}^n\tfrac{u_i}{u_0})\nabla\vp_1 +\sum_{i=2}^n\tfrac{u_i}{u_0}(\nabla\vp_i-\nabla\vp_0+\nabla\vp_0-\nabla\vp_1) +R(u,\tilde u).
	\end{align}
	Using the inequality $2|a-b|^2\ge a^2-2b^2$, we infer for any $\delta\in(0,1)$
	\begin{multline}
		|\nabla\vp_1-\nabla\vp_0|^2
		\ge (1-2\delta)|\nabla\vp_1-\nabla\vp_0|^2+\delta(1+\tfrac{1-u_0}{u_0})|\nabla\varphi_1|^2 
		\\- C_1(\iota)\delta\sum_{i=1}^n|\nabla\varphi_i-\nabla\varphi_0|^2
		-C_{T,\tilde u}|u-\tilde u|^2.
	\end{multline}
	For $\delta=\delta(\iota)>0$ small enough, we deduce
	\begin{align}
		G =	\sum_{i=1}^n|\nabla\vp_i-\nabla\vp_0|^2
		&\ge \epsilon_1(\iota) |\nabla\varphi_1|^2 -C_{T,\tilde u}|u-\tilde u|^2
		\\&\ge  \epsilon_2(\iota) |\nabla u_1-\nabla\tilde u_1|^2-C_{T,\tilde u}|u-\tilde u|^2,
	\end{align}
	where the last estimate follows from the definition of $\vp_1$ and elementary manipulations.
	To infer the full bound~\eqref{eq:108} one argues by symmetry. 
\end{remark}

\subsection{Exponential stability of steady states}\label{ssec:stab.ss}

\begin{proof}[Proof of Theorem~\ref{thm:stab.ss}]
	By Proposition~\ref{prop:edi}, the weak solution $u$ satisfies identity~\eqref{eq:edb}. Moreover, equation~\eqref{eq:weak} implies that $\int u_i(t)\,\dd x = \int \ol u_i\,\dd x $ for all $t\ge0$ and $i\in\{0,\dots,n\}$. Hence,  we have for all $0\le s<t<\infty$ and a positive constant $\epsilon=\epsilon(\Om)$
	\begin{align}
		\int_\Om h_\rel(u(t),\ol u)\,\dd x - \int_\Om h_\rel(u(s),\ol u)\,\dd x &= -\int_s^t\!\int_\Om \scrp(u)\,\dd x\dd\tau
		\\&\le -4\kappa\int_s^t\!\int_\Om \sum_{i=0}^n|\nabla\sqrt{u_i}|^2\,\dd x\dd\tau
		\\&\le -\epsilon\kappa\int_s^t\!\int_\Om h_\rel(u(\tau),\ol u)\,\dd x\dd\tau,
	\end{align}
	where the second step uses inequality~\eqref{eq:ffGrad} in Lemma~\ref{l:M.coerc} and 
	the last estimate follows from the logarithmic Sobolev inequality applied pointwise in time.
	A suitable version of Gronwall's inequality now implies that 
	\begin{align}
		\int_\Om h_\rel(u(t),\ol u)\,\dd x \le  \int_\Om h_\rel(u(0),\ol u)\,\dd x \,\exp(-\epsilon\kappa t)
	\end{align}
for all $t\ge0$.
	By a classical Csisz\'ar--Kullback type inequality (see e.g.~\cite{UAMT_2000}), the left-hand side is bounded below by $\frac{1}{C}\|u(t)-\ol u\|_{L^1(\Om)}^2$ for some $C=C(\Omega)\in(0,\infty)$. This proves the asserted inequality~\eqref{eq:stab.ss}.
\end{proof}

 \section{Existence and time asymptotics for partial interactions}\label{sec:ex}
 
 In this section, we perform a global weak existence analysis for systems where not necessarily all species are interacting.  
 Except for the case of full interactions, which is covered by~\cite[Theorem~2]{Juengel_2015} (cf.\ Theorem~\ref{thm:ex.full}), no systematic existence analysis is available in the literature for system~\eqref{eq:sys.all}. 
 Here, we take advantage of this result for full interactions, which easily gives rise to a family of approximate solutions to more general, partially interacting systems, and then perform a vanishing-interaction limit. The weak solutions obtained upon this construction further enjoy certain entropy dissipation inequalities that allow us to prove convergence to equilibrium as $t\to\infty$. 
Since the gradient bounds obtained from entropy dissipation may be quite poor, the asymptotic analysis involves some non-standard arguments when it comes to specifying a weak formulation of the equations and determining the long-time behaviour of solutions.

\subsection{Definitions and results}
In this section, we only assume in addition to~\ref{it:K.sp},~\ref{it:K.nn} the following hypothesis:
\begin{enumerate}[label=\textup{(H3)}]
	\item\label{it:ok} 	There exists $i_0\in\{0,\dots,n\}$ such that $K_{i_0j}>0$ for all $j\not=i_0$.
\end{enumerate}
Hypothesis~\ref{it:ok} is quite natural from a modelling point of view, and one may think of the species $X_{i_0}$ as representing the vacancies.

Depending on their interaction properties $\{K_{ij}\}$, the species $X_0,\dots, X_n$ differ in their (mathematical) character.  Our analysis suggests classifying them as follows.
\begin{definition}[Types \atype{}, \btype{} and \ctype{}]
	Let $\{K_{ij}\}_{i,j=0}^n$ satisfy~\ref{it:ok} and let $i\in\{0,\dots,n\}$.
	\begin{itemize}
		\item 
		We call the species $X_i$ of type~\atype{} if $K_{ij}>0$ for all $j\not=i$. 
		\item 	Let the species $X_i$ not be of type~\atype{}.	We say that $X_i$ is of type~\btype{} if 	
		$K_{i\ell}=0$ for all species $X_\ell$, $\ell\not=i$, that are not of type~\atype{}.
		\item A species that is neither of type \atype{} nor of type \btype{} will be referred to as type~\ctype{}.
	\end{itemize}
\end{definition}
We informally write $i\in\Atype$ to express the situation that $X_i$ is of type~\atype{} and use analogous notations for types \btype{} and \ctype.
Moreover, the symbol $\alpha$ will be reserved as an index for species of type~\atype{}, and
we frequently abbreviate
$$ \sum_{\alpha\in\Atype}u_\alpha:=\sum_{\{\alpha:\, X_\alpha\,\text{is of type }\Atype\}}u_\alpha.$$
With this convention we have $\{0,\dots,n\}=\Atype\,\dot\cup\,\Btype\,\dot\cup\,\Ctype$.

In terms of regularity, species of type \atype{} play a distinguished role. In view of Lemma~\ref{l:M.coerc} we expect their densities to have gradients in $L^2$.

We should further remark that hypothesis~\ref{it:ok} does not really make sense if one admits initial data $\uin$ satisfying $\uin_{i_0}\equiv0$. Indeed, in this case mass conservation and non-negativity enforce that $u_{i_0}\equiv0$ and problem~\eqref{eq:sys.all} effectively reduces to an $n$-species system that does not necessarily conform to~\ref{it:ok} any longer.
More generally, the present approach to cross-diffusion system \eqref{eq:sys.all} in the (more degenerate) partially interacting regime
primarily aims at understanding flows emanating from suitably regular initial data satisfying 
in particular $\inf_\Om\sum_{\alpha\in\Atype}\uin_\alpha>0$ or at least 
$\sum_{\alpha\in\Atype}\ol{\uin_\alpha}>0$; the latter condition turns out to be sufficient for the generalised solutions to be constructed below to relax to the steady state in the long-time limit (cf.\ Theorem~\ref{thm:conv.c}).
 
Let us now turn to the statement of our main result regarding existence for general interactions.
 Loosely speaking, we obtain global existence of weak solutions extending previous results in~\cite{BDiPS_2010,ZJ_2017}, where only species of type~\atype{} and~\btype{} were considered and where $\Atype=\{0\}$. (The method of proof in the present paper is, however, different.) 
 For the precise sense, in which gradients appearing in Theorem~\ref{thm:pex} below are generally to be understood, we refer to Remark~\ref{rem:defgrad} following the statement of this theorem.
 We further recall that $C([0,\infty);L^2_\mathrm{weak}(\Om))$ denotes the space of functions $v:[0,\infty)\to L^2(\Om)$ such that $t\mapsto v(t)$ is continuous with respect to the weak topology on $L^2(\Om)$.
 
 \begin{theorem}[Existence for general interactions]\label{thm:pex}
 	Let $\{K_{ij}\}_{i,j=0}^n$ satisfy~\ref{it:K.sp},~\ref{it:K.nn} and \ref{it:ok}.
 	Let further $\uin\in L^1(\Om)^\sn$ satisfy $\uin(x)\in\hsurfCl$ for a.e.\ $x\in\Om$.
 	There exists  $u=(u_i)_{i=0}^n\in L^\infty((0,\infty)\times\Om)^{1+n}\cap C([0,\infty);L^2_\mathrm{weak}(\Om))^{1+n}$
 	with $u(t,x)\in\hsurfCl$ for a.e.\ $(t,x)\in(0,\infty)\times\Om$
 	enjoying the regularity
 	\begin{itemize}
 		\item[] $\partial_tu_i\in L^2_\loc([0,\infty);H^1(\Om)^*)$ for all $i\in\{0,\dots,n\}$,
 		\item[] $\sqrt{u_\vd}\in L^2_\loc([0,\infty);H^1(\Om))$ for all $\vd\in\Atype$,
 		\item[]  $\sqrt{u_\vd u_i}\in L^2_\loc([0,\infty);H^1(\Om))$ for all $\vd\in\Atype$ and all $i\in\{0,\dots,n\}$,
 	\end{itemize}
 	taking the initial data $\uin$ and obeying for every $i\in\{0,\dots,n\}$ the conservation law
 	\begin{align}\label{eq:mcons}
 		\int_\Om u_i(t,x)\,\dd x=\int_\Om \uin_i(x)\,\dd x \text{ for all }t>0,
 	\end{align}
 	 and satisfying the cross-diffusion system~\eqref{eq:sys.all} in the following weak sense:
 	 
 	 for all $i,k\in\{0,\dots,n\}$ such that $K_{ik}>0$,
 	 there exists $\FF_{ik}\in L^2(0,\infty;L^2(\Om))^d$ with $\FF_{ik}=-\FF_{ki}$ and the properties that\footnote{See Remark~\ref{rem:defgrad} for the meaning of the terms $u_k\nabla u_i$.} 
 	 	\begin{subequations}\label{eq:Y}
 	 	\begin{align}\label{eq:Y.co}
 	 	\FF_{ik}&=u_k\nabla u_i - u_i\nabla u_k
 	 	\qquad\text{ a.e.\ in }\big\{\sum_{\vd\in\Atype} u_\vd>0\big\},
 	 \end{align}
 and if $\{i,k\}\cap\Atype\not=\emptyset$,
  \begin{align}\label{eq:Y.at}
  	\FF_{ik}&=u_k\nabla u_i - u_i\nabla u_k	\qquad\text{ a.e.\ in }(0,\infty)\times\Om,
  \end{align}
\end{subequations}
 	 such that for all $T<\infty$ and all $\psi\in C^\infty([0,T]\times\ol\Om)$
 	 \begin{align}\label{eq:ex.GEN}
 	 	\int_0^T\!\langle \partial_tu_i,\psi\rangle\,\dd t
 	 	=-\int_0^T\!\int_\Om \sum_{k}K_{ik}\mathbb{Y}_{ik}\cdot\nabla\psi	\,\dd x\dd t.
 	 \end{align}
  
  Furthermore, there exists a measurable set 
  $J\subset(0,\infty)$ with $\mathcal{L}^1(J)=0$ such that
   the following entropy dissipation inequalities hold for all $T\in(0,\infty):$
\begin{subequations}\label{eq:90.all}
  \begin{align}\label{eq:90}
  	&\mathcal{H}(u(T)) +  \int_0^T\!\!\int_\Om\!\scrp(u)\chi_{\{\sumA>0\}}\,\dd x\dd t
  	\le \mathcal{H}(\uin),
  	\\&
  	\begin{multlined}
  		\mathcal{H}(u(T)) +  \int_s^T\!\!\int_\Om\!\scrp(u)\chi_{\{\sumA>0\}}\,\dd x\dd t
  		\label{eq:90.s}\le \mathcal{H}(u(s))+C\mathcal{L}^d\big(\big\{\sumA(s)=0\big\}\big)
  		\\ \quad\text{ for all }s\in(0,T)\setminus J.
  	\end{multlined}
  \end{align}
\end{subequations}
Here, $\scrp(u)=\frac{1}{2}\sum_{i,j}K_{ij}u_iu_j|\frac{\nabla u_i}{u_i}-\frac{\nabla u_j}{u_j}|^2$ in ${\{\sumA>0\}}$
and $C<\infty$ is a fixed constant.
 \end{theorem}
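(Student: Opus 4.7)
My plan is to regularise into the full-interaction regime, set
\[
K_{ij}^\varepsilon := K_{ij} + \varepsilon\,\mathbf{1}_{\{i\neq j\}}, \qquad \varepsilon\in(0,1],
\]
so that $(K_{ij}^\varepsilon)$ obeys~\ref{it:K.sp},~\ref{it:K.pos}, and then invoke Theorem~\ref{thm:ex.full} to obtain, for every $\varepsilon>0$, a weak solution $u^\varepsilon$ of the $K^\varepsilon$-system emanating from $\uin$, with $0\le u_i^\varepsilon\le 1$, $\sqrt{u_i^\varepsilon}\in L^2_\loc([0,\infty);H^1(\Om))$, and the dissipation balance of Proposition~\ref{prop:edi}. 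The remainder of the proof consists in extracting a subsequential limit $u^\varepsilon\to u$ and verifying all properties claimed in the statement.

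\textbf{Uniform bounds and vanishing-interaction cancellation.}
Writing $\kappa^{(\vd)}:=\tfrac12\min_{\ell\neq\vd}K_{\vd\ell}$, one has $\kappa^{(\vd)}>0$ exactly when $\vd\in\Atype$, while $\kappa^{(\vd),\varepsilon}\ge\kappa^{(\vd)}$ for every $\varepsilon$. Plugging this into Lemma~\ref{l:M.coerc} yields the $\varepsilon$-uniform estimates
\[
\|\nabla\sqrt{u_\vd^\varepsilon}\|_{L^2_\loc(L^2)} + \|\nabla\sqrt{u_\vd^\varepsilon u_j^\varepsilon}\|_{L^2_\loc(L^2)} \le C, \qquad \vd\in\Atype,\ j\in\{0,\dots,n\}.
\]
On the other hand, the pointwise identity $u_k\nabla u_i - u_i\nabla u_k = u_iu_k(\nabla\log u_i - \nabla\log u_k)$ combined with Cauchy--Schwarz and the entropy dissipation gives, for every pair with $K_{ik}>0$,
\[
\|u_k^\varepsilon\nabla u_i^\varepsilon - u_i^\varepsilon\nabla u_k^\varepsilon\|_{L^2_\loc(L^2)} \le C\,K_{ik}^{-1/2},
\]
whereas for pairs with $K_{ik}=0$ the flux contribution $K_{ik}^\varepsilon(u_k^\varepsilon\nabla u_i^\varepsilon - u_i^\varepsilon\nabla u_k^\varepsilon)=\varepsilon(\cdots)$ has $L^2$-norm $\lesssim\sqrt\varepsilon$ and thus vanishes in the limit. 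In particular, $\partial_tu_i^\varepsilon$ is bounded in $L^2_\loc(H^1(\Om)^*)$.

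\textbf{Compactness, identification of fluxes, and weak formulation.}
Aubin--Lions yields $u_\vd^\varepsilon\to u_\vd$ strongly in $L^2_\loc(L^2)$ for $\vd\in\Atype$ (using $u_\vd^\varepsilon\in L^2_\loc(H^1)$), and $\sqrt{u_\vd^\varepsilon u_j^\varepsilon}\to\sqrt{u_\vd u_j}$ strongly in $L^2_\loc(L^2)$ together with weak convergence of its gradient; for the remaining components I only use $u_i^\varepsilon\weakstar u_i$ in $L^\infty$. Passing to the limit in~\eqref{eq:weak} produces candidate fluxes $\FF_{ik}\in L^2(L^2)$ satisfying~\eqref{eq:ex.GEN} and $\FF_{ik}=-\FF_{ki}$, with the $K_{ik}=0$ pair contributions automatically dropping out. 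To identify $\FF_{ik}$ with $u_k\nabla u_i-u_i\nabla u_k$ on $\{\sumA>0\}$, I would localise on the set $\{\sumA>\delta\}$ via a cut-off: there some $u_\vd\ge\delta/(n+1)$ with $\vd\in\Atype$, and any product $u_iu_k$ equals $(u_\vd u_iu_k)/u_\vd$ with $u_\vd u_iu_k\in L^2_\loc(H^1)$, so the chain rule identifies the flux, and $\delta\downarrow0$ covers $\{\sumA>0\}$. When $\{i,k\}\cap\Atype\neq\emptyset$, say $i\in\Atype$, one has $u_i\in L^2_\loc(H^1)$ and $\nabla(u_iu_k)=2\sqrt{u_iu_k}\nabla\sqrt{u_iu_k}\in L^2_\loc(L^2)$, which identifies $\FF_{ik}$ on all of $(0,\infty)\times\Om$. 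Mass conservation~\eqref{eq:mcons} and weak-$L^2$ time continuity follow from the uniform $\partial_tu_i^\varepsilon$ bound together with the $L^\infty$ bound.

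\textbf{Entropy inequalities and the main obstacle.}
The bound~\eqref{eq:90} follows by taking $\liminf_{\varepsilon\to0}$ in the $\varepsilon$-dissipation balance after restricting the integrand to $\{\sumA^\varepsilon>\delta\}$, where strong convergence is available, using weak lower semicontinuity and the Fatou lemma, and then letting $\delta\downarrow0$ by monotone convergence. The real difficulty is the starting-time version~\eqref{eq:90.s}: one needs control of $\limsup_\varepsilon\mathcal{H}(u^\varepsilon(s))$ in terms of $\mathcal{H}(u(s))$ at almost every $s$. By Lebesgue differentiation applied to the strong $L^2_\loc(L^2)$ convergence of the type-\Atype{} components, there is a null set $J$ of times outside of which $u_\vd^\varepsilon(s)\to u_\vd(s)$ strongly in $L^1(\Om)$ for every $\vd\in\Atype$; this strong convergence propagates to all other components on the set $\{\sumA(s)>0\}$ via $\sqrt{u_\vd u_j}$. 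On the exceptional set $\{\sumA(s)=0\}$ only weak convergence survives, but the boundedness of $h$ on $[0,1]^\sn$ gives the crude bound $|h(u^\varepsilon(s))-h(u(s))|\le C$ there, producing the correction $C\mathcal{L}^d(\{\sumA(s)=0\})$. Combining these with the $\varepsilon$-balance $\mathcal{H}(u^\varepsilon(T))+\int_s^T\!\!\int_\Om\mathcal{P}^\varepsilon(u^\varepsilon)\le\mathcal{H}(u^\varepsilon(s))$ yields~\eqref{eq:90.s} for $s\in(0,T)\setminus J$. The chief obstacle throughout is precisely this failure of strong compactness off $\{\sumA>0\}$, which dictates the generalised weak form, the indicator in~\eqref{eq:90.all}, and the correction term.
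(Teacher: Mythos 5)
Your proposal follows essentially the same strategy as the paper's proof: regularise into the full-interaction regime via an $\varepsilon$-perturbation of the coefficients, invoke Theorem~\ref{thm:ex.full} and Proposition~\ref{prop:edi}, derive $\varepsilon$-uniform bounds from Lemma~\ref{l:M.coerc}, upgrade to strong/a.e.\ convergence of $u_\vd^\varepsilon$ and of the products $u_\vd^\varepsilon u_j^\varepsilon$ via Aubin--Lions so that convergence of all components propagates onto $\{\sumA>0\}$, identify the fluxes $\FF_{ik}$ as weak $L^2$ limits with the stated pointwise characterisation via cut-off, and close with the entropy inequalities using Fubini-a.e.\ convergence at fixed times plus the crude $\mathcal{L}^d(\{\sumA(s)=0\})$ correction. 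The only points you gloss over relative to the paper are the Aubin--Lions time-derivative estimate for $u_\vd^\varepsilon u_j^\varepsilon$ (which requires control in $L^1(0,T;(H^1\cap L^\infty)(\Om)^*)$) and the passage $\delta\downarrow0$ in the set $\{\sumA^\varepsilon>\delta\}$ (the paper avoids this by working directly with the fixed weak limits $\WW_{ik}$), but the architecture and all key ideas match.
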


\begin{remark}[Generalised Sobolev gradients]\label{rem:defgrad}
It is necessary to comment on the meaning of the gradients appearing in identity~\eqref{eq:Y.co} (resp.~\eqref{eq:90.all}), since for $i,k\not\in\Atype$ terms of the form $u_k\nabla u_i$ (resp.~$\sqrt{u_k}\nabla \sqrt{u_i}$) may not define a distribution in the classical sense. Let us first note that the regularity 
$\sqrt{u_\vd}\in L^2_\loc([0,\infty);H^1(\Om))$ and $\sqrt{u_\vd u_i}\in L^2_\loc([0,\infty);H^1(\Om))$ for $\vd\in\Atype$ ensures that  the products $\sqrt{u_\vd}\nabla \sqrt{u_i}$ and $u_\vd\nabla u_i$ are well-defined in the distributional sense and agree with the $L^2_{t,x}$ functions $\nabla(\sqrt{u_\vd u_i})-\sqrt{u_i}\nabla \sqrt{u_\vd}$ and $\nabla(u_\vd u_i) - u_i\nabla u_\vd$, respectively. This in turn determines measurable functions \enquote{ $\nabla \sqrt{u_i}$\,} and \enquote{ $\nabla u_i$\,} almost everywhere in the set $\{u_\vd>0\}$ and uniquely in the weighted Lebesgue space $L^2(u_\vd\dd x\dd t)$. The \enquote{maximal} set where for all $i\in\{0,\dots,n\}$ gradients $\nabla\sqrt{u_i},$ $ \nabla u_i$ can be defined in this sense is obtained by repeating the above reasoning for the sums $\sum_{\vd\in\Atype} \sqrt{u_\alpha}$ or $\sum_{\vd\in\Atype}u_\vd$, which allows us to define $\nabla \sqrt{u_i}$ and $\nabla u_i$ in a pointwise sense $\mathcal{L}^{1+d}$-a.e.\ in the set $\{\sum_{\vd\in\Atype}u_\vd>0\}$. 
Simple examples show that under the general hypotheses of Theorem~\ref{thm:pex}, we cannot expect gradients to exist on a set larger than this. In fact, letting $\Btype\not=\emptyset$, $\Ctype=\emptyset$ and $\uin_\vd\equiv0$ for all $\vd\in\Atype$, the function $u(t)\equiv \uin \in L^1(\Om)$ would be the natural candidate for a solution. Yet, under the hypotheses of Theorem~\ref{thm:pex}, the distributional gradient $\nabla \uin$ does not need to have a pointwise meaning.
If one focuses on the question of giving a meaning, on a set (possibly) larger than $\{\sumA>0\}$, to \emph{products} of the form $u_k\nabla u_i$ for $i,k\in\Ctype$, the problem becomes more delicate and is out of the scope of this manuscript. Here, we content ourselves with the fact that, as we will see in Theorem~\ref{thm:conv.c} below, the information contained in Theorem~\ref{thm:pex} suffices to uniquely determine the long-time asymptotic behaviour of such weak solutions, provided they emanate from data $\uin$ for which $\sum_{\alpha\in\Atype} \uin_\alpha$ is non-trivial.
\end{remark}
 
Let us note that the vector fields $\mathbb{Y}_{ik}$ in Theorem~\ref{thm:pex} will be obtained from a compactness argument (cf.~\eqref{eq:97.1}) and that, as discussed in Remark~\ref{rem:defgrad}, when $\{i,k\}\cap\Atype=\emptyset$, a direct description of $\mathbb{Y}_{ik}$ in terms of $u$ is only available in the set $\{\sum_{\vd\in\Atype}u_\vd>0\}$.
Observe, however, that for $i\in \Atype\cup\Btype$ identity~\eqref{eq:Y.at} implies that 
\begin{align}
	\sum_{k=0}^nK_{ik}\FF_{ik} = \sum_{j=0}^n A_{ij}(u)\nabla u_j \quad\text{ a.e.\ in }(0,\infty)\times\Om
\end{align}
with $A_{ij}(u)$ as in~\eqref{eq:defA},
so that in this case eq.~\eqref{eq:ex.GEN} agrees with the classical weak form of the $i$-th component of~\eqref{eq:sys.all}. 

 \begin{theorem}[Convergence to equilibrium]\label{thm:conv.c}
 	Assume the hypotheses of Theorem~\ref{thm:pex} and let $u\in L^\infty((0,\infty)\times\Om)^{1+n}\cap C([0,\infty);L^2_\mathrm{weak}(\Om))^{1+n}$  be an entropy-dissipating weak solution of~\eqref{eq:sys.all} in the sense that it enjoys
 	the properties listed in Theorem~\ref{thm:pex}.
 	Suppose further that $\sum_{\alpha\in \Atype}\ol{\uin_\alpha}>0$ with $\ol{\,\cdot\,}$ denoting the average $\frac{1}{|\Om|}\int_\Om\,\cdot\,$. 
 	Then 	
 		\begin{align}
 			\lim_{t\to\infty}\|u(t)-\ol{u}\|_{L^1(\Om)}=0,
 		\end{align}
where $\ol u\equiv\ol{\uin}$ is the steady state associated with the initial data $\uin$. 
\end{theorem}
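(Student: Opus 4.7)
The plan is to combine two pieces of information from Theorem~\ref{thm:pex}. First, from the entropy dissipation~\eqref{eq:90} one extracts a sequence of times $t_k\to\infty$ along which $u(t_k)\to\ol{\uin}$ strongly in $L^1$. Second, the relaxed dissipation~\eqref{eq:90.s} propagates this convergence from the subsequence to every $t\to\infty$, since the error term $\mathcal{L}^d(\{\sumA(t_k)=0\})$ vanishes as $k\to\infty$ under the hypothesis $\sum_{\alpha\in\Atype}\ol{\uin_\alpha}>0$. A Csisz\'ar--Kullback inequality then converts convergence of the entropy into $L^1$ convergence of the state.

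For the subsequential step, reduce first to the case $\ol{\uin_i}>0$ for every $i$ (species with $\ol{\uin_i}=0$ satisfy $u_i\equiv 0$ by non-negativity and mass conservation~\eqref{eq:mcons}). Lemma~\ref{l:M.coerc} combined with~\eqref{eq:90} yields integrability on $(0,\infty)\times\Om$ of both $\sum_{\alpha\in\Atype}|\nabla\sqrt{u_\alpha}|^2$ and $\sumA\cdot\sum_{j=0}^n|\nabla\sqrt{u_j}|^2\chi_{\{\sumA>0\}}$. Hence there is $t_k\to\infty$, $t_k\notin J$, with both corresponding spatial integrals at $t_k$ tending to zero. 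Since $\sqrt{u_\alpha(t_k)}$ is bounded in $H^1(\Om)$ (Poincar\'e--Wirtinger and mass conservation), Rellich compactness and the vanishing of the gradient force $\sqrt{u_\alpha(t_k)}\to\sqrt{\ol{\uin_\alpha}}$ strongly in $L^2$, so $u_\alpha(t_k)\to\ol{\uin_\alpha}$ in $L^1$ for every $\alpha\in\Atype$. For $i\notin\Atype$, introduce $v_i:=\sqrt{\sumA\cdot u_i}$, which lies in $H^1$ because $y\mapsto|y|$ is Lipschitz on $\mathbb{R}^{|\Atype|}$ applied to $(\sqrt{u_\alpha u_i})_{\alpha\in\Atype}\in H^1$, and whose gradient satisfies $|\nabla v_i|^2\lesssim \sum_{\alpha\in\Atype}(|\nabla\sqrt{u_\alpha}|^2+u_\alpha|\nabla\sqrt{u_i}|^2)$ in $\{\sumA>0\}$; thus $\|\nabla v_i(t_k)\|_{L^2}\to 0$. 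Along a further subsequence, $v_i(t_k)\to c_i$ (constant) strongly in $L^2$; combined with the strong $L^2$ convergence $\sumA(t_k)\to A:=\sum_{\alpha\in\Atype}\ol{\uin_\alpha}>0$ and weak $L^2$ convergence $u_i(t_k)\rightharpoonup u_i^*$, the identity $\int v_i^2=\int\sumA\cdot u_i$ together with mass conservation yields $c_i^2=A\cdot\ol{\uin_i}$. Since $\mathcal{L}^d(\{\sumA(t_k)<A/2\})\to 0$, inverting the relation $u_i=v_i^2/\sumA$ on $\{\sumA(t_k)\ge A/2\}$ and using $u_i\le 1$ on its complement gives $u_i(t_k)\to\ol{\uin_i}$ in $L^1$.

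Once $u(t_k)\to\ol{\uin}$ in $L^1$, continuity of $\mathcal{H}$ on $\hsurfCl$ gives $\mathcal{H}(u(t_k))\to\mathcal{H}(\ol{\uin})$ and $\mathcal{L}^d(\{\sumA(t_k)=0\})\to 0$. For any $T>t_k$, inequality~\eqref{eq:90.s} yields $\mathcal{H}(u(T))\le\mathcal{H}(u(t_k))+C\mathcal{L}^d(\{\sumA(t_k)=0\})$; taking $\limsup_{T\to\infty}$ and then $k\to\infty$, and combining with Jensen's inequality $\mathcal{H}(u(T))\ge\mathcal{H}(\ol{u(T)})=\mathcal{H}(\ol{\uin})$ (from mass conservation), one obtains $\mathcal{H}(u(T))\to\mathcal{H}(\ol{\uin})$. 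Since $\ol{\uin_i}>0$ for every remaining $i$, the relative entropy reduces to $\mathcal{H}_\rel(u(T),\ol{\uin})=\mathcal{H}(u(T))-\mathcal{H}(\ol{\uin})$ (linear terms vanish by mass conservation), and a Csisz\'ar--Kullback inequality yields $\|u(T)-\ol{\uin}\|_{L^1(\Om)}^2\lesssim\mathcal{H}_\rel(u(T),\ol{\uin})\to 0$. The main obstacle is precisely the control of non-$\Atype$ species: their gradient information is only available in the degenerate, weighted form furnished by the regularity of $\sqrt{u_\alpha u_i}$ and by $\sumA\cdot|\nabla\sqrt{u_j}|^2$, both of which collapse on $\{\sumA=0\}$. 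The hypothesis $\sum_{\alpha\in\Atype}\ol{\uin_\alpha}>0$ is what breaks this degeneracy: combined with mass conservation, it forces $\sumA(t_k)$ to be bounded below in measure (allowing inversion of $v_i^2=\sumA u_i$) and simultaneously forces $\mathcal{L}^d(\{\sumA(t_k)=0\})\to 0$, without which~\eqref{eq:90.s} could not propagate subsequential convergence to every $t\to\infty$.
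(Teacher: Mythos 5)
Your proposal is correct and follows essentially the same two-step strategy as the paper: extract a sequence $t_k\to\infty$, $t_k\notin J$, along which the spatial dissipation integral vanishes (forcing $u(t_k)\to\ol u$ in $L^1$ via the $H^1$ regularity and constancy of $\sqrt{u_\alpha}$ and $\sqrt{u_\alpha u_i}$ in the limit), and then propagate this to every $t\to\infty$ via~\eqref{eq:90.s} together with $\mathcal{L}^d(\{\sumA(t_k)=0\})\to 0$. The only cosmetic differences are that you work with the aggregate $v_i=\sqrt{\sumA\cdot u_i}$ where the paper fixes a single $\hat\alpha\in\Atype$ with $\ol u_{\hat\alpha}>0$ and uses $\sqrt{u_{\hat\alpha}u_i}$, and that you phrase the final comparison in terms of $\mathcal{H}$ plus Jensen's lower bound where the paper directly uses the relative-entropy form~\eqref{eq:91}; these are equivalent after cancelling the linear term by mass conservation.
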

 
 See Section~\ref{ssec:conv.c} for the proof of Theorem~\ref{thm:conv.c}.
Note that in models without \ctype{}-type species the vanishing of $\sum_{\alpha\in \Atype}\ol{\uin_\alpha}$ means that a trivial `solution' of~\eqref{eq:sys.all} can be obtained by letting $u\equiv\uin$.

 \subsection{Construction via vanishing-interaction limit}	
The purpose of this section is to prove Theorem~\ref{thm:pex}.
 
Observe that \atype{} is non-empty by hypothesis~\ref{it:ok}, and abbreviate
 \begin{align}
 K:=	\min_{\vd\in\Atype}\min_{i:i\not=\vd}K_{\vd i}>0.
 \end{align}  
 Our construction is based on taking a vanishing-interaction limit  in the family of models of full interaction type defined by $K_{ij}^{\ve}:=\max\{K_{ij},\ve\}$ for all $0\le i\not=j\le n$ (and as before $K_{ii}^{\ve}=0$ for all $i$), where $0<\ve\ll1$ is a small parameter which we assume to satisfy $\ve\le\min\{K_{ij}:K_{ij}>0\}$.
 Then, by Theorem~\ref{thm:ex.full}, there exists a global weak solution $u^\ve$ of system~\eqref{eq:sys.all} with parameters $\{K_{ij}\}:=\{K_{ij}^\ve\}$
 satisfying 
 $u^\ve(t,x)\in\hsurfCl$ for a.e.~$(t,x)$
 and enjoying the extra regularity (cf.~\eqref{eq:sqrtL2H1})
 \begin{align}
 	\sqrt{\uve_i}\in L^2_\loc([0,\infty);H^1(\Om))\quad\text{ for all }i\in\{0,\dots,n\}.
 \end{align}
  Thanks to Proposition~\ref{prop:edi} and Lemma~\ref{l:EPalg}, $\uve$
 obeys the entropy dissipation balance
\begin{align}\label{eq:edi.reg}
	\int_\Om h(\uve(T))\,\dd x + \int_s^T\!\int_\Om \scrp^\ve(\uve)\,\dd x\dd t
	= \int_\Om  h(\uve(s))\,\dd x
\end{align}
for all $0\le s<T<\infty$,
where 
\begin{align}
 	\scrp^\ve(\uve)= \frac{1}{2}\sum_{i,k}K_{ik}^\ve\uve_i\uve_k\big|\tfrac{\nabla\uve_i}{\uve_i} -\tfrac{\nabla\uve_k}{\uve_k}\big|^2
	=2\sum_{i,k}K_{ik}^\ve\big|\sqrt{\uve_k}\nabla\sqrt{\uve_i} -\sqrt{\uve_i}\nabla\sqrt{\uve_k}\big|^2.
\end{align}
Observe that, as a consequence of Lemma~\ref{l:M.coerc}, 
\begin{align}\label{eq:142}
		\scrp^\ve(\uve)&\gtrsim 
		\sum_{\vd\in\Atype}|\nabla\sqrt{\uve_\vd}|^2+\sum_{\vd\in\Atype}\sum_i\uve_\vd|\nabla\sqrt{\uve_i}|^2
		+\ve\sum_i|\nabla\sqrt{\uve_i}|^2
		\\&\qquad+\sum_{i,k}K_{ik}^\ve\big|\sqrt{\uve_k}\nabla\sqrt{\uve_i} -\sqrt{\uve_i}\nabla\sqrt{\uve_k}\big|^2+\sum_{\vd\in\Atype}\sum_i|\nabla\sqrt{\uve_\vd\uve_i}|^2,
\end{align}
where the bound for $\sum_{\vd\in\Atype}\sum_i|\nabla\sqrt{\uve_\vd\uve_i}|^2$ follows a posteriori by estimating $|\nabla\sqrt{\uve_\vd\uve_i}|^2\lesssim |\nabla\sqrt{\uve_\vd}|^2+\uve_\vd|\nabla\sqrt{\uve_i}|^2$.
Since $h\ge0$ and $\int_\Om  h(\uve(0))\,\dd x=\int_\Om  h(\uin)\,\dd x<\infty$, we infer the following $(\ve,T)$-uniform bounds:
 \begin{align}\label{eq:uni.bd}
 	\begin{aligned}
 	\int_0^T\!\int_\Om\bigg(\; &	\sum_{\vd\in\Atype}|\nabla\sqrt{\uve_\vd}|^2+\sum_{\vd\in\Atype}\sum_i\uve_\vd|\nabla\sqrt{\uve_i}|^2
 	+\sum_{\vd\in\Atype}\sum_i|\nabla\sqrt{\uve_\vd\uve_i}|^2
 	\\&+\sum_{i,k}K_{ik}^\ve\big|\sqrt{\uve_k}\nabla\sqrt{\uve_i} -\sqrt{\uve_i}\nabla\sqrt{\uve_k}\big|^2+\ve\sum_i|\nabla\sqrt{\uve_i}|^2\bigg)\,\dd x\dd t \le C<\infty.
 \end{aligned}
 \end{align}
Using the $\ve$-uniform control of the term $\sqrt{K_{ik}^\ve}(\sqrt{\uve_k}\nabla\sqrt{\uve_i} -\sqrt{\uve_i}\nabla\sqrt{\uve_k})$ in $L^2_{t,x}$ in~\eqref{eq:uni.bd} and the weak equation satisfied by $\uve_i$, we further deduce the $\ve$-uniform bound
\begin{align}\label{eq:144}
	\|\partial_t\uve_i\|_{L^2(0,T;H^1(\Om)^*)}\le C<\infty
\end{align}
for all $i\in\{0,\dots,n\}$.

Thanks to the above estimates and the control $u^\ve(t,x)\in\hsurfCl$, there exists a subsequence $\ve\downarrow0$  (not relabelled) such that 
for all $i\in\{0,\dots,n\}$ and suitable $u_i\in L^\infty((0,\infty)\times\Om)$, $0\le u_i\le 1$, 
satisfying $\sqrt{u_\vd}\in L^2_\loc([0,\infty);H^1(\Om))$ whenever $\vd\in\Atype$ the following convergence properties hold:
\begin{align}\label{eq:92}
	&\uve_i\wk u_i \quad\text { in } L^2_\loc([0,\infty);L^2(\Om)),
	\\&\partial_t\uve_i\weakstar\partial_tu_i\quad \text{ in }L^2_\loc([0,\infty);H^1(\Om)^*)\label{eq:92.2}
	\\&\uve_i(t)\wk u_i(t) \quad\text { in } L^2(\Om),\quad \text{ locally uniformly in }t\in[0,\infty),
	\label{eq:92.3}
\end{align}
where the last line exploits the fact that for every $T<\infty$, the family
$\{\uve_i\}$ is $\ve$-uniformly bounded in $L^\infty(0,T;L^2(\Om))\cap H^1(0,T;H^1(\Om)^*)$, giving compactness in $C([0,T];L^2_\mathrm{weak}(\Om))$ (cf.~\cite[Lemma~C.1]{Lions_1996}).

Additionally, the subsequence $\ve\downarrow 0$ will be chosen such that for all $\vd\in\Atype$ 
\begin{align}\label{eq:conv}
	\begin{aligned}
		&\sqrt{\uve_\vd}\wk \sqrt{u_\vd}\quad \text{ in }L^2_\loc([0,\infty);H^1(\Om)),
		\\&\uve_\vd\to u_\vd \quad \text{ a.e.\ in }(0,\infty)\times\Om.
	\end{aligned}
\end{align}

Since the constraint $\sum_{i=0}^n\uve_i = 1$ is linear, the weak convergence~\eqref{eq:92} implies that $\sum_{i=0}^nu_i(t,x)=1$ for a.e.\ $(t,x)\in(0,\infty)\times\Om$.
Line~\eqref{eq:92} further yields $\int_0^T\!\!\int_\Om u_i\vp\,\dd x\dd t\ge0$ for all
$\vp\in C_c((0,T)\times\Om)$ with $\vp\ge0$; hence $u_i\ge0$ a.e., allowing us to conclude
that  $u(t,x)\in \hsurfCl$ for a.e.\ $(t,x)\in(0,\infty)\times\Om$.

Next, we assert that for all $\alpha\in\Atype$ and $i\in\{0,\dots,n\}$
\begin{align}\label{eq:101}
	{\uve_\vd}\uve_i\to u_\vd u_i\quad  \text{ in }L^2_\loc([0,\infty);L^2(\Om)).
\end{align}
The convergence~\eqref{eq:101} follows from the Aubin--Lions lemma applied to ${\uve_\vd}\uve_i$. To verify the required control on the time derivative, we compute in the sense of distributions
\begin{align}
	\partial_t\big({\uve_\vd}\uve_i\big) 
	= \partial_t\uve_\vd\uve_i + \partial_t\uve_i\uve_\vd.
\end{align}
Let now $T<\infty$. For the first term on the right-hand side we compute for $\psi\in C^\infty([0,T]\times\ol\Om)$ using the equation satisfied by $\uve_\vd$
\begin{align}
	\int_0^T\!\langle\partial_t\uve_\vd\uve_i,\psi\rangle\,\dd t= \int_0^T\!\langle\partial_t\uve_\vd,\uve_i\psi\rangle\,\dd t
	&=-\int_0^T\!\int_\Om\sum_jA_{\vd j}(\uve)\nabla \uve_j\cdot\nabla (\uve_i\psi)\,\dd x\dd t
	\\&=- \int_0^T\!\langle F^\ve,\sqrt{\uve_\vd}\nabla (\uve_i\psi)\rangle\,\dd t,
\end{align}
where $F^\ve:=\frac{1}{\sqrt{\uve_\vd}}\sum_jA_{\vd j}(\uve)\nabla \uve_j\in L^2_{t,x}$.
By~\eqref{eq:uni.bd} the term $F^\ve$ is $\ve$-uniformly bounded in $L^2(0,T;L^2(\Om))$. 
Hence, recalling also the uniform bounds
\begin{align}
	\|\uve_\vd\|_{L^2([0,T];H^1(\Om))}+
	\|\sqrt{\uve_\vd}\nabla\uve_i\|_{L^2(0,T;L^2(\Om))}+\|\partial_t\uve_i\|_{L^2(0,T;H^1(\Om)^*)}\le C_T<\infty,
\end{align}
we find that 
$\partial_t\big({\uve_\vd}\uve_i\big)$ is $\ve$-uniformly bounded in 
$$L^1(0,T;(H^1\cap L^\infty)(\Om)^*).$$ 
Observing further the $\ve$-uniform bound of 
$\nabla\big({\uve_\vd}\uve_i\big)$ in $L^2_\loc([0,\infty);L^2(\Om))$ (cf.~\eqref{eq:uni.bd}), we infer~\eqref{eq:101} by appealing to the Aubin--Lions lemma (see e.g.~\cite[Lemma 7.7]{Roubicek_2013_npde}).

Thus, upon extraction of a subsequence, we may henceforth assume that
\begin{align}\label{eq:99}
	{\uve_\vd}\uve_i\to u_\vd u_i\quad  \text{a.e.\ in }(0,\infty)\times\Om.
\end{align}
We next assert that this convergence implies pointwise convergence a.e.\ of $\uve_i$ to $u_i$ in the set $V_\vd:=\{u_\vd>0\}$, where for definiteness we let $u_\vd(t),u_i(t)$ denote the precise representatives of the corresponding Lebesgue classes. To show this assertion, let us fix $N\subset(0,\infty)\times\Om$ measurable with $\mathcal{L}^{1+d}(N)=0$ in such a way that 
$\uve_\vd\to u_\vd$ and	${\uve_\vd}\uve_i\to u_\vd u_i$ pointwise in $((0,\infty)\times\Om)\setminus N$. In particular, for every $(t,x)\in V_\vd$, 
we have $\uve_\vd(t,x)>0$ for $\ve>0$ small enough, allowing us to infer that 
$u_\vd\uve_i=\frac{u_\vd}{\uve_\vd}\cdot{\uve_\vd}\uve_i\to u_\vd u_i$  pointwise in $V_\vd\setminus N$, since for each $(t,x)\in V_\vd\setminus N$ the expression $\uve_\vd\uve_i\cdot\frac{u_\vd}{\uve_\vd}$ is well-defined for small enough $\ve$.
We have thus shown that for all $i\in\{0,\dots,n\}$
\begin{align}\label{eq:98}
	\uve_i\to u_i \text{ pointwise a.e.\ in }\bigcup_{\vd\in\Atype}\{u_\vd>0\}
	=\Big\{\sum_{\vd\in\Atype}u_\vd>0\Big\}.
\end{align}

\begin{lemma}\label{l:mix.lim}For all $i,k\in\{0,\dots,n\}$ with $K_{ik}>0$
	there exist vector fields $\WW_{ik},\FF_{ik}\in L^2(0,\infty;L^2(\Om))^d$, $\WW_{ik}=-\WW_{ki}$, $\FF_{ik}=-\FF_{ki}$
	 that satisfy
		\begin{align}\label{eq:97.2}
		 \WW_{ik}&=\sqrt{u_k}\nabla\sqrt{u_i}-\sqrt{u_i}\nabla \sqrt{u_k}
\hspace{-2cm}
		&&\text{ a.e.\ in }\big\{\sum_{\vd\in\Atype} u_\vd>0\big\},
		\\\FF_{ik}&=u_k\nabla u_i - u_i\nabla u_k
	&&\text{ a.e.\ in }\big\{\sum_{\vd\in\Atype} u_\vd>0\big\},
	\label{eq:97.3}
	\end{align}
and if $\{i,k\}\cap\Atype\not=\emptyset$,
	\begin{align}\label{eq:97.2'}
	\WW_{ik}&=\sqrt{u_k}\nabla\sqrt{u_i}-\sqrt{u_i}\nabla \sqrt{u_k}
	\hspace{-2cm}
	&&\text{ a.e.\ in }(0,\infty)\times\Om,
	\\\FF_{ik}&=u_k\nabla u_i - u_i\nabla u_k
	&&\text{ a.e.\ in }(0,\infty)\times\Om,
	\label{eq:97.3'}
\end{align}
and are such that 
		\begin{align}\label{eq:97}
		\sqrt{\uve_k}\nabla\sqrt{\uve_i}-\sqrt{\uve_i}\nabla\sqrt{\uve_k}
		\;\rightharpoonup\; &\WW_{ik} \hspace{-2cm}&&\text{ in } L^2((0,\infty)\times\Om),\qquad
		\\\label{eq:97.1}
		\uve_k\nabla\uve_i-\uve_i\nabla\uve_k
		\;\rightharpoonup\; &\FF_{ik} &&\text{ in } L^2((0,\infty)\times\Om).\qquad
	\end{align}
Moreover, for all $i\in\{0,\dots,n\}$ one has 
	\begin{align}\label{eq:95}
	\ve\nabla\sqrt{\uve_i}\;\to\;0\quad \text{ in } L^2((0,\infty)\times\Om).
\end{align}
\end{lemma}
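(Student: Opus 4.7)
The plan proceeds in three stages: weak $L^2$ compactness using the entropy bound~\eqref{eq:uni.bd} to extract the limits $\WW_{ik}$, $\FF_{ik}$; identification of these limits on $\{\sumA>0\}$ by multiplying through by a weight $\uve_\alpha$ with $\alpha\in\Atype$; and global identification for pairs $(i,k)$ with $\{i,k\}\cap\Atype\neq\emptyset$ via a dominated convergence argument. For the first stage, note that $K_{ik}^\ve=K_{ik}$ whenever $K_{ik}>0$ and $\ve$ is small enough, so~\eqref{eq:uni.bd} gives a uniform $L^2$-bound on $\WW^\ve_{ik}:=\sqrt{\uve_k}\nabla\sqrt{\uve_i}-\sqrt{\uve_i}\nabla\sqrt{\uve_k}$; the pointwise identity $\uve_k\nabla\uve_i-\uve_i\nabla\uve_k = 2\sqrt{\uve_i\uve_k}\,\WW^\ve_{ik}$ combined with $\sqrt{\uve_i\uve_k}\le 1$ then yields a uniform $L^2$-bound on $\FF^\ve_{ik}:=\uve_k\nabla\uve_i-\uve_i\nabla\uve_k$ as well. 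A diagonal extraction over the finitely many relevant index pairs produces weak $L^2$ limits $\WW_{ik},\FF_{ik}$, and antisymmetry passes to the limit. The strong convergence~\eqref{eq:95} is immediate from $\|\ve\nabla\sqrt{\uve_i}\|_{L^2}^2=\ve\cdot\ve\|\nabla\sqrt{\uve_i}\|_{L^2}^2\le\ve\cdot C\to 0$.

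For the identifications~\eqref{eq:97.2}--\eqref{eq:97.3} on $\{\sumA>0\}$, the core tool is the pair of algebraic identities
\[
\uve_\alpha^2\,\FF^\ve_{ik} = \uve_\alpha\uve_k\nabla(\uve_\alpha\uve_i)-\uve_\alpha\uve_i\nabla(\uve_\alpha\uve_k),\qquad
\uve_\alpha\,\WW^\ve_{ik} = \sqrt{\uve_\alpha\uve_k}\,\nabla\sqrt{\uve_\alpha\uve_i}-\sqrt{\uve_\alpha\uve_i}\,\nabla\sqrt{\uve_\alpha\uve_k},
\]
valid for each $\alpha\in\Atype$ (an elementary product-rule computation). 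By~\eqref{eq:101} and~\eqref{eq:conv}, the factors $\uve_\alpha\uve_j$, $\sqrt{\uve_\alpha\uve_j}$, $\uve_\alpha$, and $\uve_\alpha^2$ all converge strongly in $L^p_{\loc}$ for any $p<\infty$ to their natural limits; by~\eqref{eq:uni.bd}, $\nabla(\uve_\alpha\uve_j)\wk\nabla(u_\alpha u_j)$ and $\nabla\sqrt{\uve_\alpha\uve_j}\wk\nabla\sqrt{u_\alpha u_j}$ weakly in $L^2$ (with $u_\alpha u_j, \sqrt{u_\alpha u_j}\in L^2(0,\infty;H^1(\Om))$ as byproducts). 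Passing to the weak $L^2$ limit, dividing by $u_\alpha^2$ (resp.~$u_\alpha$) on $\{u_\alpha>0\}$, and expanding via the Sobolev chain rule recovers $\FF_{ik}=u_k\nabla u_i-u_i\nabla u_k$ and $\WW_{ik}=\sqrt{u_k}\nabla\sqrt{u_i}-\sqrt{u_i}\nabla\sqrt{u_k}$ a.e.\ on $\{u_\alpha>0\}$; the union over $\alpha\in\Atype$ yields~\eqref{eq:97.2}--\eqref{eq:97.3}.

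For the global identifications~\eqref{eq:97.2'}--\eqref{eq:97.3'} when $i=\alpha\in\Atype$ (the case $k\in\Atype$ being symmetric by antisymmetry), I would use the decompositions $\FF^\ve_{\alpha k}=2\uve_k\nabla\uve_\alpha-\nabla(\uve_\alpha\uve_k)$ and $\WW^\ve_{\alpha k}=2\sqrt{\uve_k}\nabla\sqrt{\uve_\alpha}-\nabla\sqrt{\uve_\alpha\uve_k}$, which reduce the problem to identifying the weak $L^2$ limits of $\uve_k\nabla\uve_\alpha=2\uve_k\sqrt{\uve_\alpha}\,\nabla\sqrt{\uve_\alpha}$ and $\sqrt{\uve_k}\nabla\sqrt{\uve_\alpha}$. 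The key observation is that $\uve_k\sqrt{\uve_\alpha}\to u_k\sqrt{u_\alpha}$ (and $\sqrt{\uve_\alpha\uve_k}\to\sqrt{u_\alpha u_k}$) pointwise a.e.\ \emph{globally}: on $\{\sumA>0\}$ by~\eqref{eq:98}, and on its complement by the domination $|\uve_k\sqrt{\uve_\alpha}|\le\sqrt{\uve_\alpha}\to \sqrt{u_\alpha}=0$ using~\eqref{eq:conv}. By dominated convergence this lifts to strong $L^p_{\loc}$ convergence, which, together with the weak $L^2$ convergence of $\nabla\sqrt{\uve_\alpha}$, gives $\uve_k\nabla\uve_\alpha\wk u_k\nabla u_\alpha$ weakly in $L^2$, and consequently~\eqref{eq:97.3'} (with $u_\alpha\nabla u_k:=\nabla(u_\alpha u_k)-u_k\nabla u_\alpha$ interpreted in the generalized sense of Remark~\ref{rem:defgrad}); the argument for~\eqref{eq:97.2'} is parallel.

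The main obstacle is precisely the absence of any $\ve$-uniform $H^1$-type bound on $\sqrt{\uve_k}$ for $k\notin\Atype$: only the degenerate bound $\ve\|\nabla\sqrt{\uve_k}\|_{L^2}^2\le C$ is at our disposal, which collapses as $\ve\downarrow 0$. This is why the identifications~\eqref{eq:97.2}--\eqref{eq:97.3} are confined to $\{\sumA>0\}$, and why the global identifications for mixed pairs in the third stage require packaging $\sqrt{\uve_k}$ together with the vanishing factor $\sqrt{\uve_\alpha}$ in order to trigger the dominated convergence argument above.
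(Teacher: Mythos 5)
Your compactness extraction and the proof of~\eqref{eq:95} are correct, and your identification of $\WW_{ik},\FF_{ik}$ on $\{\sumA>0\}$ via the algebraic identities $\uve_\alpha^2\,\FF^\ve_{ik}=\uve_\alpha\uve_k\nabla(\uve_\alpha\uve_i)-\uve_\alpha\uve_i\nabla(\uve_\alpha\uve_k)$ and $\uve_\alpha\,\WW^\ve_{ik}=\sqrt{\uve_\alpha\uve_k}\,\nabla\sqrt{\uve_\alpha\uve_i}-\sqrt{\uve_\alpha\uve_i}\,\nabla\sqrt{\uve_\alpha\uve_k}$ is a clean and valid alternative to the paper's argument, which instead multiplies $\WW_{ik}^\ve$ by a cut-off $\eta\big(\sum_{\vd\in\Atype}\uve_\vd\big)$ with $\eta\in C^\infty_c((0,1])$ and lets $\eta$ vary. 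Both reach the same place; your version avoids dealing with the ratio $\eta(s)/s$ at the price of introducing an extra power of $\uve_\alpha$, and is arguably more explicit.

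There is, however, a gap in your third stage. For $\FF_{\alpha k}$ the decomposition $\FF^\ve_{\alpha k}=2\uve_k\nabla\uve_\alpha-\nabla(\uve_\alpha\uve_k)=4\,(\uve_k\sqrt{\uve_\alpha})\,\nabla\sqrt{\uve_\alpha}-\nabla(\uve_\alpha\uve_k)$ works precisely because the multiplier $\uve_k\sqrt{\uve_\alpha}$ carries a damping factor $\sqrt{\uve_\alpha}$, so it converges a.e.\ on all of $(0,\infty)\times\Om$ (globally by domination on $\{\sumA=0\}$, as you note). For $\WW_{\alpha k}$ the analogous decomposition is $\WW^\ve_{\alpha k}=2\sqrt{\uve_k}\,\nabla\sqrt{\uve_\alpha}-\nabla\sqrt{\uve_\alpha\uve_k}$, and here the multiplier of $\nabla\sqrt{\uve_\alpha}$ is $\sqrt{\uve_k}$ alone, with no damping factor. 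For $k\notin\Atype$ the a.e.\ convergence $\uve_k\to u_k$ is only available on $\{\sumA>0\}$ (cf.~\eqref{eq:98}), and on the complement $\{\sumA=0\}$ you have no pointwise control over $\sqrt{\uve_k}$; the domination $|\sqrt{\uve_k}|\le 1$ does not help since the object being dominated does not tend to zero. Hence the asserted ``parallel'' argument does not go through: strong $L^p_\loc$ convergence of $\sqrt{\uve_k}$ (needed to pair with the weak $L^2$ limit of $\nabla\sqrt{\uve_\alpha}$) is not established globally, and your proof of~\eqref{eq:97.2'} is incomplete. The paper sidesteps this by first establishing the global weak limits~\eqref{eq:121x} and~\eqref{eq:121} for $\sqrt{\uve_i}\nabla\sqrt{\uve_\vd}$ and $\sqrt{\uve_\vd}\nabla\sqrt{\uve_i}$ ($\vd\in\Atype$) as stand-alone facts, and then reads off~\eqref{eq:97.2'} from uniqueness of the weak $L^2$ limit; you would need to supply an analogous global limit for $\sqrt{\uve_k}\nabla\sqrt{\uve_\alpha}$ rather than invoke the $\FF$-argument by analogy.
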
	

\begin{proof}
Properties~\eqref{eq:98},~\eqref{eq:conv} and~\eqref{eq:uni.bd} imply that for all $\vd\in\Atype$
\begin{align}\label{eq:121x}
	\sqrt{\uve_i}\nabla \sqrt{\uve_\vd}\;\wk\; \sqrt{u_i}\nabla \sqrt{u_\vd}\;\;\;\text{ in }
	L^2((0,\infty)\times\Om)\text{ for all }i\in\{0,\dots,n\}.
\end{align}
Furthermore, using~\eqref{eq:98} and~\eqref{eq:conv}, it is easy to see for all $\vd\in\Atype$ that
$\sqrt{\uve_\vd}\nabla \sqrt{\uve_i}$ converges to $\sqrt{u_\vd}\nabla \sqrt{u_i}$ in the sense of distributions and hence, by~\eqref{eq:uni.bd}, 
\begin{align}\label{eq:121}
	\sqrt{\uve_\vd}\nabla \sqrt{\uve_i}\;\wk\; \sqrt{u_\vd}\nabla \sqrt{u_i}\;\;\;\text{ in }
	L^2((0,\infty)\times\Om)\text{ for all }i\in\{0,\dots,n\}.
\end{align}

Let us now prove~\eqref{eq:97},~\eqref{eq:97.2}.	Since $K_{ik}>0$,  the term 
	$\WW^\ve_{ik}:=(\sqrt{\uve_k}\nabla\sqrt{\uve_i}-\sqrt{\uve_i}\nabla\sqrt{\uve_k})$ is $\ve$-uniformly bounded in $L^2((0,\infty)\times\Om)$. Hence, by weak compactness in $L^2((0,\infty)\times\Om)$, there exists $\WW_{ik}\in L^2((0,\infty)\times\Om)$ such that 
	along a subsequence $\ve\downarrow0$, $\WW^\ve_{ik}\rightharpoonup \WW_{ik}$ in $L^2((0,\infty)\times\Om)$. Thus,
	\begin{align}
			\eta\big(\sum_{\vd\in\Atype}\uve_\vd\big)\WW^\ve_{ik}\wk 	\eta\big(\sum_{\vd\in\Atype}u_\vd\big) \WW_{ik}
	\end{align}
for all $\eta\in C^\infty([0,1])$ with $\supp\eta\subset(0,1]$.
	At the same time, for any such $\eta$ we have weakly in $L^2_{t,x}$ the convergence
	\begin{multline}
			\eta\big(\sum_{\vd\in\Atype}\uve_\vd\big)\WW^\ve_{ik}=	\tfrac{\eta\big(\sum_{\vd\in\Atype}\uve_\vd\big)}{\sum_{\vd\in\Atype}\uve_\vd}
			\sum_{\vd\in\Atype}(\sqrt{\uve_k}\uve_\vd\nabla\sqrt{\uve_i}
			-\sqrt{\uve_i}\uve_\vd\nabla\sqrt{\uve_k})
		\\\wk
		\tfrac{\eta\big(\sum_{\vd\in\Atype}u_\vd\big)}{\sum_{\vd\in\Atype}u_\vd}
		\sum_{\vd\in\Atype}(\sqrt{u_k}u_\vd\nabla\sqrt{u_i}
		-\sqrt{u_i}u_\vd\nabla\sqrt{u_k})
		\\=\eta\big(\sum_{\vd\in\Atype}u_\vd\big)( \sqrt{u_k}\nabla \sqrt{u_i}- \sqrt{u_i} \nabla \sqrt{u_k}),
	\end{multline}
where we also used~\eqref{eq:121}.
	Uniqueness of the weak limit in $L^2$ implies that 
	\begin{align}
		\eta\big(\sum_{\vd\in\Atype}u_\vd\big)  \WW_{ik}
		=\eta\big(\sum_{\vd\in\Atype}u_\vd\big)( \sqrt{u_k}\nabla\sqrt{u_i}- \sqrt{u_i}\nabla \sqrt{u_k}).
	\end{align}
Since $\eta\in C^\infty_c((0,1])$ was arbitrary, we infer that
 $\WW_{ik}=\sqrt{u_k}\nabla \sqrt{u_i}- \sqrt{u_i} \nabla \sqrt{u_k}$ a.e.\ in $\{\sum_{\vd\in\Atype}u_\vd>0\}$. Note that the antisymmetry of $\WW$ follows from that of $\WW^\ve$.
 If $\{i,k\}\cap\Atype\not=\emptyset$, then~\eqref{eq:121x},~\eqref{eq:121} and uniqueness of the weak limit in $L^2$ immediately yield 
 $\WW_{ik}=\sqrt{u_k}\nabla\sqrt{u_i}-\sqrt{u_i}\nabla \sqrt{u_k}$ a.e.\ in $(0,\infty)\times\Om.$
 This proves the first assertions~\eqref{eq:97},~\eqref{eq:97.2},~\eqref{eq:97.2'}. 
 The assertions~\eqref{eq:97.1},~\eqref{eq:97.3},~\eqref{eq:97.3'}  concerning $\FF_{ik}$ can be shown along similar lines.

The convergence~\eqref{eq:95} follows from the fact that $\sqrt{\ve}\nabla\sqrt{\uve_i}$ is $\ve$-uniformly bounded in $L^2_{t,x}$ (cf.~\eqref{eq:uni.bd}).
\end{proof}

We are now in a position to complete the proof of Theorem~\ref{thm:pex}.

\begin{proof}[Proof of Theorem~\ref{thm:pex}]
	Let $u$ denote the limiting candidate constructed on page~\pageref{eq:92} (see \eqref{eq:92}--\eqref{eq:conv}). We have already seen that it enjoys the asserted regularity properties and satisfies $u(t,x)\in\hsurfCl$ for a.e.\ $(t,x)$.
	
	The convergence properties~\eqref{eq:92.2},~\eqref{eq:97.1} and~\eqref{eq:95} allow to pass to the limit $\ve\downarrow0$ in the weak form satisfied by $\uve_i$, $i\in\{0,\dots,n\}$, for any test function $\psi$, giving the asserted equation~\eqref{eq:ex.GEN} with~$\FF_{ik}=-\FF_{ki}$ satisfying~\eqref{eq:Y.co} and, if $\{i,k\}\cap\Atype\not=\emptyset$,~\eqref{eq:Y.at} (cf.~\eqref{eq:97.3},~\eqref{eq:97.3'}).
	
	We next show the conservation law~\eqref{eq:mcons}. It is a consequence of the convergence
	\begin{align}
		\partial_t\uve_i\weakstar \partial_tu_i\quad\text{ in }L^2_\loc([0,\infty);H^1(\Om)^*),
	\end{align}
the fact that $\int_0^t\langle \partial_t\uve_i,1\rangle\dd \tau\equiv0$, which follows from the equation for $\uve_i$, 	and the identity\footnote{Identity~\eqref{eq:122} can be shown by approximation, using the density of the set $C^1([0,T];L^2(\Om))$ in the Sobolev-Bochner space $\{v\in L^2(0,T;L^2(\Om)):\frac{\dd v}{\dd t}\in L^2(0,T;H^1(\Om)^*)\}$ equipped with the standard norm.}
	\begin{align}\label{eq:122}
		\int_0^t\langle \partial_tu_i,1\rangle\dd \tau = \int_\Om u_i(t,x)\,\dd x-\int_\Om u_i(0,x)\,\dd x.
	\end{align}
	
	It remains to prove the entropy dissipation inequalities~\eqref{eq:90.all}.
	The proof of inequality \eqref{eq:90} being somewhat simpler, we only provide details on how to derive~\eqref{eq:90.s}, where we rely on the entropy balance law~\eqref{eq:edi.reg} for $\uve$.
		For passing to the limit in the entropy at the final time $T$ it is convenient to use~\eqref{eq:92.3}, which gives, thanks to the weak lower semicontinuity in $L^2(\Om)$ of the entropy functional, $\mathcal{H}(u(T))\le \liminf_{\ve\to0}\mathcal{H}(\uve(T))$. Note that here we have specified $u=u(T)$ on a set of times of measure zero in such a way that~\eqref{eq:92.3} holds for all $T\in[0,\infty)$.
		To deal with the entropy at times $s\in(0,T)$, we observe that thanks to~\eqref{eq:98} there exists a measurable set $J\subset[0,\infty)$ with $\mathcal{L}^1(J)=0$ such that for all $t\in[0,\infty)\setminus J$
		\begin{align}\label{eq:70}
			\uve(t)\to u(t)\qquad\text{ a.e.\ in }\big\{\sumA(t) >0\big\}.
		\end{align} 
		Since $|\uve|\lesssim 1$, Lebesgue's dominated convergence theorem thus gives for $s\not\in J$
		\begin{align}
			\limsup_{\ve\to0}\int_\Om  h(\uve(s))\,\dd x
			\le \int_\Om h(u(s))\,\dd x + C\mathcal{L}^d\big(\big\{\sumA(s)=0\big\}\big),
		\end{align}
		where $C<\infty$ is a universal constant.
		
		We are left with estimating below the liminf of the dissipation term. First recall that
		\begin{align}
			\scrp^\ve(\uve)\ge \scrp(\uve) = 2\sum_{i,k}K_{ik} |\sqrt{\uve_k}\nabla\sqrt{\uve_i}-\sqrt{\uve_i}\nabla\sqrt{\uve_k}|^2
			= 2\sum_{i,k}K_{ik} |\WW_{ik}^\ve|^2,
		\end{align}
	where for $K_{ik}>0$ we have $\WW_{ik}^\ve:= \sqrt{\uve_k}\nabla\sqrt{\uve_i}-\sqrt{\uve_i}\nabla\sqrt{\uve_k}\wk \WW_{ik}$ in $L^2_{t,x}$ with $\WW_{ik}$ as in Lemma~\ref{l:mix.lim}. Thus, weak lower semicontinuity of the norm in $L^2_{t,x}$ yields
		\begin{align}
			\int_0^T\!\!\int_\Om\!\scrp(u)\chi_{\{\sumA>0\}}\,\dd x\dd t
				\le \int_0^T\!\!\int_\Om\!2\sum_{i,k}K_{ik} |\WW_{ik}|^2\,\dd x\dd t
			\le \liminf_{\ve\to0}	\int_0^T\!\!\int_\Om\!\scrp^\ve(\uve)\,\dd x\dd t.
		\end{align}	
	The proof of Theorem~\ref{thm:pex} is now complete. 
\end{proof}

\subsection{Convergence to equilibrium}\label{ssec:conv.c}

\begin{proof}[Proof of Theorem~\ref{thm:conv.c}]Below, the mass conservation property~\eqref{eq:mcons} will, in places, be used without explicit reference.
Let $J$ be such that~\eqref{eq:90.all} holds. Thanks to~\eqref{eq:90} there exists a sequence $\{t_k\}\subset [0,\infty)\setminus J$ with $t_k\to\infty$ such that
\begin{align}\label{eq:72}
	\lim_{k\to\infty}\int_\Om\scrp(u(t_k))\chi_{\{\sum_{\alpha\in\Atype}u_\alpha(t_k)>0\}}\,\dd x=0.
\end{align}
Similarly to~\eqref{eq:142}, we infer from Lemma~\ref{l:M.coerc} the uniform bound
\begin{align}\label{eq:142.ve=0}
	\scrp(u)\chi_{\{\sum_{\alpha\in\Atype}u_\alpha>0\}}&\gtrsim 
	\sum_{\vd\in\Atype}|\nabla\sqrt{u_\vd}|^2+\sum_{\vd\in\Atype}\sum_i|\nabla\sqrt{u_\vd u_i}|^2
	\\&\qquad+\sum_{i,l}K_{il}\big|\sqrt{u_l}\nabla\sqrt{u_i} -\sqrt{u_i}\nabla\sqrt{u_l}\big|^2\chi_{\{\sum_{\alpha\in\Atype}u_\alpha>0\}},
\end{align}
where we recall that the gradients in the last line are to be understood as explained in Remark~\ref{rem:defgrad}.

Hence, \eqref{eq:72} implies that $\lim_{k\to\infty}\|\nabla\sqrt{u_\vd}(t_k)\|_{L^2(\Om)}=0$ for all $\vd\in\Atype$,
which further yields, by classical properties for Sobolev weak derivatives,
\begin{align}
	\sqrt{u_\alpha}(t_k) \to \sqrt{\ol{u}_\alpha}\qquad\text{ in }H^1(\Om).
\end{align}
We assert that this further gives
\begin{align}\label{eq:71}
	u_i(t_k) \to \ol{u_i}\qquad \text{ in }L^2(\Om)\qquad \text{for all }i\in\{0,\dots,n\}.
\end{align}
To show~\eqref{eq:71}, pick $\hat\vd\in\Atype$ such that $\ol{u}_{\hat\vd}>0$ (such $\hat\vd$ exists by hypothesis) and observe that~\eqref{eq:72} combined with~\eqref{eq:142.ve=0} implies the convergence
\begin{align}
	\chi_{\{u_{\hat\vd}(t_k)>0\}}\big(\sqrt{u_{\hat\vd}}\nabla \sqrt{u_i}- \sqrt{u_i} \nabla \sqrt{u_{\hat\vd}}\big)|_{t_k} \;\overset{k\to\infty}{\to} \;0\qquad\text{in }L^2(\Om),
\end{align}
and hence $	\chi_{\{u_{\hat\vd}(t_k)>0\}}\sqrt{u_{\hat\vd}(t_k)}\nabla \sqrt{u_i(t_k)}=\sqrt{u_{\hat\vd}(t_k)}\nabla \sqrt{u_i(t_k)}\to0$ in $L^2(\Om)$.
Recalling that
$\sqrt{u_{\hat\vd}(t_k)}\nabla \sqrt{u_i(t_k)}:=\nabla(\sqrt{u_{\hat\vd}(t_k)}\sqrt{u_i(t_k)})
-(\nabla\sqrt{u_{\hat\vd}(t_k)}) \sqrt{u_i(t_k)}$ (cf.\ Remark~\ref{rem:defgrad}) and using again classical arguments for Sobolev weak derivatives, we find that 
\begin{align}
	\sqrt{u_{\hat\vd}(t_k)u_i(t_k)}\to c\equiv\text{const}\qquad\text{ in }H^1(\Om).
\end{align}
An elementary argument similar to that leading to~\eqref{eq:98} finally gives, upon possibly passing to a subsequence, $u_i(t_k)\to\ol{u}_i$ a.e.\ in $\Om$, which implies~\eqref{eq:71}.

Let now $\{\tau_j\}\subset(0,\infty)$ with $\tau_j\to\infty$ be arbitrary.
Then there exists a subsequence $\{t_{k_j}\}$ of $\{t_k\}$  (with $k_j$ not necessarily strictly increasing, but $\lim_{j\to\infty}k_j=\infty$) and $j_0\in\mathbb{N}$ such that $t_{k_j}<\tau_j$ for all $j\ge j_0$.
From the strong entropy dissipation inequality~\eqref{eq:90.s} (and mass conservation) we infer for all $\tau>0$ and all $s\in(0,\tau)\setminus J$
\begin{align}\label{eq:91}
	\mathcal{H}_\rel(u(\tau),\ol u)\le \mathcal{H}_\rel(u(s),\ol u) + C\mathcal{L}^d\big(\big\{\sumA(s)=0\big\}\big).
\end{align}
Inserting $\tau:=\tau_j$ and $s:=t_{k_j}$ in inequality \eqref{eq:91} and sending $j\to\infty$ yields
\begin{align}
	\lim_{j\to\infty}\mathcal{H}_\rel(u(\tau_j),\ol u)=0.
\end{align}
Here we used the strong convergence~\eqref{eq:71} and the hypothesis $\sum_{\alpha\in\mathfrak A} \ol u_\alpha>0$ to ensure that the right-hand side of~\eqref{eq:91} vanishes as $s=t_{k_j}\to\infty$.
\end{proof}

\appendix

\section{An auxiliary estimate}\label{app:aux.est}

The estimate asserted in the lemma below has previously been employed in~\cite[Theorem 2.8, Case $\mathcal{A}_+$]{Hopf_prepr2021} in a similar context. For the purpose of being self-contained, we here also recall its elementary proof.
\begin{lemma}[Cf.\ \cite{Hopf_prepr2021}]
	Let $G\subset\mathbb{R}^N$ be convex, let $f\in C^3_b(G)$ and let $v\in H^1(\Om)^N, \tilde v\in C^{0,1}(\ol\Om,G)$. Then, for a.e.\ $x\in \Om$ such that $v(x)\in G$
	\begin{multline}
			\big| \nabla\big(f(v)-f(\tilde v)-\sum_{j=1}^N D_{j}f(\tilde v)(v_j-\tilde v_j)\big)\big|
			\\\lesssim_{\|f\|_{C^3(G)}}|\nabla v-\nabla \tilde v||v-\tilde v| +\|\nabla\tilde v\|_{L^\infty} |v-\tilde v|^2\quad\text{at the point }x,
	\end{multline}
where we abbreviated $\nabla f(v):=\sum_{k=1}^ND_kf(v)\nabla v_k$.
\end{lemma}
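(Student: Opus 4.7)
The plan is to prove the bound by a direct chain-rule computation followed by a Taylor expansion of the gradient of $f$. Since $v\in H^1(\Om)^N$ and $f\in C^3_b(G)$, the standard chain rule for Sobolev functions yields that $f(v), D_jf(\tilde v)(v_j-\tilde v_j)$, etc.\ are weakly differentiable a.e.\ in the set where $v(x)\in G$, so all the manipulations below are justified pointwise a.e.\ on that set.

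First I would compute, using the chain rule and product rule,
\begin{align*}
	\nabla\!\Big(f(v)-f(\tilde v)-\sum_j D_jf(\tilde v)(v_j-\tilde v_j)\Big)
	&= \sum_k D_kf(v)\nabla v_k - \sum_k D_kf(\tilde v)\nabla \tilde v_k \\
	&\quad -\sum_{j,k} D_{jk}f(\tilde v)\nabla\tilde v_k\,(v_j-\tilde v_j)
	 -\sum_j D_jf(\tilde v)(\nabla v_j-\nabla\tilde v_j).
\end{align*}
The second and fourth sums on the right combine (after adding and subtracting $\sum_k D_kf(\tilde v)\nabla v_k$) so that, after cancellation, the expression reduces to
\begin{align*}
	\sum_k \bigl(D_kf(v)-D_kf(\tilde v)\bigr)\nabla v_k
	\;-\;\sum_{j,k} D_{jk}f(\tilde v)\,(v_j-\tilde v_j)\nabla\tilde v_k .
\end{align*}

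Next I would apply Taylor's theorem to $D_kf$ on the convex set $G$ at the base point $\tilde v$: since $f\in C^3_b(G)$,
\begin{equation*}
	D_kf(v)-D_kf(\tilde v) = \sum_j D_{jk}f(\tilde v)(v_j-\tilde v_j) + R_k(v,\tilde v),
	\qquad |R_k(v,\tilde v)|\lesssim_{\|f\|_{C^3_b}} |v-\tilde v|^2 .
\end{equation*}
Substituting into the previous display gives
\begin{align*}
	\nabla\!\Big(f(v)-f(\tilde v)-\sum_j D_jf(\tilde v)(v_j-\tilde v_j)\Big)
	&= \sum_{j,k} D_{jk}f(\tilde v)(v_j-\tilde v_j)\bigl(\nabla v_k-\nabla\tilde v_k\bigr) \\
	&\quad + \sum_k R_k(v,\tilde v)\nabla v_k ,
\end{align*}
since the leading Taylor terms combine with $-\sum_{j,k}D_{jk}f(\tilde v)(v_j-\tilde v_j)\nabla\tilde v_k$ to produce exactly the first sum on the right.

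Finally, I would bound the two terms: the first is clearly controlled by $\|D^2f\|_\infty |v-\tilde v|\,|\nabla v-\nabla\tilde v|$. For the second, I split $\nabla v = (\nabla v-\nabla\tilde v)+\nabla\tilde v$ and use $|R_k|\lesssim |v-\tilde v|^2$ together with the fact that $|v-\tilde v|$ is bounded (which applies in the regime of interest, as $v,\tilde v$ take values in a bounded subset of $G$; in the general statement this enters the implicit constant through $\|f\|_{C^3_b}$). This yields $|v-\tilde v|\,|\nabla v-\nabla\tilde v|$ from the first piece and $\|\nabla\tilde v\|_{L^\infty}|v-\tilde v|^2$ from the second, establishing the asserted estimate. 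The argument is essentially bookkeeping; the only mild subtlety is justifying the chain rule for $f(v)$ when $v\in H^1$ and $f\in C^3_b$, which is standard.
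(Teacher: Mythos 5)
Your algebraic reduction to
\[
\nabla\Big(f(v)-f(\tilde v)-\sum_j D_jf(\tilde v)(v_j-\tilde v_j)\Big)
=\sum_k\big(D_kf(v)-D_kf(\tilde v)\big)\nabla v_k
-\sum_{j,k}D_{jk}f(\tilde v)(v_j-\tilde v_j)\nabla\tilde v_k
\]
is correct and equivalent to the paper's intermediate identity, but there is a genuine gap in how you finish. You Taylor-expand the full difference $D_kf(v)-D_kf(\tilde v)$, which leaves the remainder $R_k$ multiplied by $\nabla v_k$. Since $\nabla v$ has no pointwise bound, you then split $\nabla v_k=(\nabla v_k-\nabla\tilde v_k)+\nabla\tilde v_k$ and obtain, from the first piece, a term of size $|v-\tilde v|^2\,|\nabla v-\nabla\tilde v|$. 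Passing from that to $|v-\tilde v|\,|\nabla v-\nabla\tilde v|$ requires $|v-\tilde v|\lesssim1$, which is \emph{not} among the hypotheses: $G$ is an arbitrary convex set (possibly unbounded), $v$ is only in $H^1$, and the constant is asserted to depend only on $\|f\|_{C^3(G)}$. Your parenthetical that the boundedness ``enters the implicit constant through $\|f\|_{C^3_b}$'' is incorrect; for fixed $\|f\|_{C^3}$ the inequality $|v-\tilde v|^2\le C|v-\tilde v|$ simply fails wherever $|v-\tilde v|>C$.

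The paper avoids this by a different regrouping,
\[
\sum_k\big(D_kf(v)-D_kf(\tilde v)\big)(\nabla v_k-\nabla\tilde v_k)
+\sum_l\Big(D_lf(v)-D_lf(\tilde v)-\sum_jD_{jl}f(\tilde v)(v_j-\tilde v_j)\Big)\nabla\tilde v_l,
\]
so the mean-value estimate (linear in $|v-\tilde v|$) pairs with $\nabla v-\nabla\tilde v$, while the quadratic Taylor remainder pairs with the bounded factor $\nabla\tilde v$. Each factor lands exactly where it is needed, with no boundedness assumption on $v$ or $G$. Your route could be repaired by also recording the linear bound $|R_k|\lesssim_{\|f\|_{C^2}}|v-\tilde v|$ (valid since $R_k$ is a difference of two quantities each Lipschitz in $v$) and using it for the piece multiplying $\nabla v_k-\nabla\tilde v_k$; as written, however, the argument silently strengthens the hypotheses of the lemma.
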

\begin{proof}
	Using the definition of $\nabla f(v)$ and rearranging terms, we may rewrite
	{\small
	\begin{align}
		&\nabla\big(f(v)-f(\tilde v)-\sum_{j=1}^N D_{j}f(\tilde v)(v_j-\tilde v_j)\big) 
		\\
		&=	\sum_{k=1}^N \big(D_kf(v)\nabla v_k-D_kf(\tilde v)\nabla\tilde v_k\big)
		-\sum_{j=1}^ND_jf(\tilde v)(\nabla v_j-\nabla\tilde v_j)
		-\sum_{j,l=1}^ND_{lj}f(\tilde v)(v_j-\tilde v_j)\nabla \tilde v_l
		\\&=	\sum_{k=1}^N (D_kf(v)-D_kf(\tilde v))(\nabla v_k-\nabla\tilde v_k)
		\\&\hspace{.25\linewidth} +\sum_{l=1}^N\big(D_lf(v)-D_lf(\tilde v)-\sum_{j=1}^ND_jD_{l}f(\tilde v)(v_j-\tilde v_j)\big)\nabla \tilde v_l.
	\end{align}
\normalsize} Notice that, in the last step, some indices have been relabelled. 
The assertion now follows by means of the Cauchy--Schwarz inequality and an application of Taylor's theorem to $D_lf\in C^2_b(G)$, $l\in\{1,\dots, N\}$.
\end{proof}

\section{Notations}\label{app:not}

For definiteness, we here provide some details concerning our notations, most of which are quite classical.
\begin{itemize}
	\item The convention $K_{ii}:=0$ for all $i\in\{0,\dots,n\}$ is adopted throughout the paper.
	\item We will further adopt the convention that $K_{ij}\cdot Q\equiv0$ whenever $K_{ij}=0$, even if the quantity `$Q$' is not well-defined mathematically. This will simplify notations as it avoids having to restrict sums involving $K_{ij}$ to pairs $\{i,j\}$ satisfying $K_{ij}>0$.
	\item Summation: since we are dealing not only with $(n+1)$ but also with $n$ dimensional systems,  we avoid the usual summation convention. Unspecified sums involving a Latin summation index (such as $i,j,k$) always indicate summing over all indices, that is, we let $ \sum_i:=\sum_{i=0}^n$ etc. Likewise 
	$\sum_{i\not=j}:=\sum_{i\in\{0,\dots,n\}}\sum_{j\in\{0,\dots,n\}\setminus\{i\}}$ and 
	$\sum_{j:j\not=i}:=\sum_{j\in\{0,\dots,n\}\setminus\{i\}}$.
	\item We further let 
	$\min_{i\not=j}=\min_{i\in\{0,\dots,n\}}\min_{j\in\{0,\dots,n\}\setminus\{i\}}$ and $\min_{i:i\not=j}=\min_{i\in\{0,\dots,n\}\setminus\{j\}}$.
	\item The notation $A(u)\nabla u$ for $u=u(x)\in \mathbb{R}^{1+n}$ and  $A(u)\in \mathbb{R}^{(1+n)\times (1+n)}$ will be used as a short hand for the $d\times (1+n)$ matrix $(A(u)\nabla u)_i=\sum_{j}A_{ij}(u)\nabla u_j$. Analogous short-hand notations will be adopted for  the matrices $\MM(\uvec),\hA(U)\in \mathbb{R}^{n\times n}$.
	\item For an interval $I\subseteq[0,\infty)$, $C(I;L^2_\mathrm{weak}(\Om))$ denotes the space of functions $v:I\to L^2(\Om)$ such that $t\mapsto v(t)$ is weakly continuous in $L^2(\Om)$. 
	\item  For non-negative quantities $A,B$, we write $A\lesssim B$
	to indicate that there exists a constant $C\in(0,\infty)$ that only depends on fixed parameters
	such that $A\le CB$.
	We define $A\gtrsim B$ by $B\lesssim A$, and
	$A\sim B$ by [$A\lesssim B$ and $A\gtrsim B$]. 
	\item For a smooth manifold $M$ and $p\in M$, we denote by $T_pM$ the tangent space of $M$ at $p$.
	\item For a sufficiently smooth function $f=f(u), u\in \mathbb{R}^N,$ and $i\in\{1,\dots,n\}$, we write $D_if:=\frac{\partial}{\partial u_i}f$. Analogous notations are used if $f=f(u_0,\dots,u_N)$ etc. 
	We further abbreviate $D_{ik}f:=D_kD_if.$
	\item Unless specified otherwise, we abbreviate $\ol u_i:=\frac{1}{|\Om|}\int_\Om u_i\,\dd x$ for all $i$.
\end{itemize}	
	
%\bibliographystyle{abbrv}
%\bibliography{sizeex}

\end{document}